\renewcommand\qed{\hfill $\blacksquare$}
\renewcommand{\H}{\mathcal{H}}
\newcommand{\Skx}{S_{i_k}\left(\hat{x}^k\right)}
\newcommand{\lkx}{\lambda_k}
\newcommand{\I}{\mbox{Id}}
\newcommand{\N}{\mathbb{N}} 
\newcommand{\R}{\mathbb{R}} 
\newcommand{\fix}[1]{\mathrm{Fix}(#1)}
\newcommand{\limk}{\lim\limits_{k\rightarrow\infty}}
\newcommand{\tab}{\hspace{20 pt}}
\newcommand\numtrials{30\ }
\newcommand\expepsilon{10^{-2}}
\newcommand{\midarrow}{\tikz \draw[-triangle 45] (0,0) -- +(0.01,0);}
 	\newcommand{\rev}[1]{\textcolor{black}{#1}}
\newcommand{\warrowfull}[2][]{%
	\ext@arrow 0359\warrowfullfill@{#1}{#2}%
}
\newcommand{\warrowfullfill@}{%
	\arrowfill@\relbar\relbar{\mathrel{\smash{\rightharpoonup}\vphantom{\rightarrow}}}%
}
\newcommand{\rarrowfull}[2][]{%
	\ext@arrow 0359\rarrowfullfill@{#1}{#2}%
}
\newcommand{\rarrowfullfill@}{%
	\arrowfill@\relbar\relbar{\mathrel{\smash{\rightarrow}\vphantom{\rightarrow}}}%
}	
\newcommand{\warrow}{\warrowfull[\hspace*{7 pt}]{}}
\newcommand{\rarrow}{\rarrowfull[\hspace*{7 pt}]{}}
\journalname{No Journal.}
\begin{document} 
    
	\title{Asynchronous Sequential Inertial Iterations for Common Fixed Points Problems with an Application to Linear Systems
	}

	\titlerunning{Asynchronous Sequential Inertial Iterations for Common Fixed Points Problems} 
	
	\author{Howard Heaton \and Yair Censor}
	\authorrunning{Howard Heaton \and Yair Censor}
	
	\institute{
		Howard Heaton \at
		Department of Mathematics, 
		University of California Los Angeles, 
		Los Angeles, CA, 90095, USA \\
		\email{heaton@math.ucla.edu}   
		\and		
		Yair Censor \at
		Department of Mathematics, 
		University of Haifa, 
		Mt. Carmel, Haifa, 3498838, Israel \\
		\email{yair@math.haifa.ac.il}      
	}
	
	\date{Received: date / Accepted: date / {\bf Original submission: August 14, 2018. Revised: January 23, 2019.}}
	
	\maketitle
	
	\begin{abstract}
	 The common fixed points problem requires finding a point in the intersection of fixed points sets of  a finite collection of operators.
	 Quickly solving problems of this sort is of great practical importance for engineering and scientific tasks (e.g., for computed tomography).
	 Iterative methods for solving these problems often employ a Krasnosel'ski\u{\i}-Mann type iteration. We present an Asynchronous Sequential Inertial (ASI)  algorithmic framework  in a  Hilbert space  to solve common fixed points problems with a collection of nonexpansive operators. Our scheme allows use of out-of-date iterates when generating updates, thereby enabling processing nodes to work simultaneously and without synchronization.
	 This method also includes inertial type extrapolation terms to increase the speed of convergence.
	 In particular, we extend the application of the recent ``ARock algorithm'' [Peng, Z. et al. SIAM J. on Scientific Computing {\bf 38}, A2851-2879, (2016)] in the context of convex feasibility problems.
	 Convergence of the ASI algorithm is proven with no assumption on the distribution of delays, except that they be uniformly bounded. Discussion is provided along with a computational example showing the performance of the ASI algorithm applied in conjunction with a diagonally relaxed orthogonal projections (DROP) algorithm for estimating solutions to large linear systems. \\

	\keywords{convex feasibility problem \and asynchronous sequential iterations \and nonexpansive operator  \and fixed point iteration  \and Kaczmarz method  \and DROP algorithm}
	\end{abstract}

 \section{Introduction}  	
 	In this paper, we investigate common fixed points problems with nonexpansive operators in Hilbert space.
 	In this framework, we focus on the special case of convex feasibility problems.
	Solving convex feasibility problems is of practical interest because it has many applications, including areas in image recovery (e.g., computed tomography (CT)) \cite{1996-combettes-convex}, radiation therapy treatment planning \cite{1988-Censor-Cimmino-IMRT}, electron microscopy, seismology, and others,   see, e.g., \cite{1996-combettes-convex} for a collection of references. Since the speed of individual processing cores  stopped increasing significantly  and multi-core chips are becoming increasingly available \cite{2005-Geer-Chip-to-Multicore}, schemes for solving these problems in parallel are of great use. Notably, these include block-iterative methods  (see, e.g.,  \cite{1989-Aharoni-block-it-parallel-methods,2006-Bausche-Extrapolation,1995-Kiwiel-Block-it-surrogate,2001-Censor-BICAV,1988-Censor-Parallel-Block-It-Methods-Imaging-Therapy} and further references in \cite{2012-Censor-effectiveness})  and string-averaging methods \rev{(see, e.g., \cite{2001-Censor-Averaging,2016-Reich-Modular-String-Averaging})}. \\

	Although parallel methods are being developed to utilize several processing nodes at once, these algorithms are often still expressible, at some level, as sequential algorithms. For example, individual block operators may be computed using several processing nodes in parallel, but these results must be merged together (e.g., via a convex combination) and then passed to the next block operator in the sequence. That is, the collection of block operators are applied successively.
	In this work, we show that such inherently sequential algorithms can be  executed in parallel without the need for synchronization.
	This introduces robustness to dropped network transmissions, introduces a higher level of parallelization, and allows multiple processing nodes to run independently, thereby allowing faster computations of each iterate.
	\\
 
 	{\bf Related Works.}
 	Asynchronous algorithms date back at least to the early work of Chazan and Miranker  \cite{1969-Chazan}, where they used the phrase  ``chaotic relaxations''  (now often termed asynchronous relaxations) for the solution of linear systems. This was followed by numerous works (e.g., Strikwerda \cite{1997-Strikwerda}).
 	For a discussion on asynchronous algorithms, see the summary work of Frommer and Szyld\cite{2000-Frommer}.
 	In Bertsekas and Tsitsiklis \cite{1989-Bertsekas},  the distinction is made that {\it totally asynchronous} algorithms can tolerate arbitrarily large update delays while {\it partially asynchronous} algorithms are not guaranteed to work unless there is an upper bound on these delays. The analysis in \cite{1989-Bertsekas} is both for totally and partially synchronous algorithms. 
 	\\

 	Elsner, Koltracht, and Neumann \cite{1992-Elsner} proved  convergence of a sequence generated by sequential application of partially asynchronous nonlinear paracontractions. 
 	Their work was done in finite-dimensional Euclidean space and used bounded delays of out-of-date information (i.e., partial asynchrony). 
 	This provides direct application to solving linear systems of equations, e.g., through application of Kaczmarz's method \cite{1937-Kaczmarz}, which is also known in the image reconstruction from projections literature as the Algebraic Reconstruction Technique (ART), as it was discovered there by Gordon, Bender, and Herman \cite{1970-ART}.
 	More recently, Peng, Xu, Yan, and Yin \cite{2016-ARock-Original} proposed the ARock algorithm,  which is an asynchronous algorithmic framework. The task at hand there is to find a fixed point of a single separable nonexpansive operator.
 	They accomplished this by generating updates on random blocks of coordinates, using potentially out-of-date information. 
 	Similarly to this original work on ARock \cite{2016-ARock-Original} and the subsequent work by Hannah and Yin \cite{2016-Hannah-Unbounded}, we achieve our results through establishing the monotonicity of a sequence that includes the classical error added to the error introduced from using out-of-date iterates  to perform the updates.
 	\rev {The diversity of the Krasnosel'ski\u{\i}-Mann iteration, to which our work is related as mentioned in the sequel, makes it an interesting and valuable tool, see, e.g., \cite{2018-MiKM} in this journal.
 	Research activity on asynchronous algorithms is receiving ever growing  attention recently as evidenced by some very interesting recent works of Combettes, Eckstein and co-workers \cite{2017-QNE-Iterations-Affine-Hull,2018-Combettes-Async,2018-Johnstone-Projective-Splitting-Async,2018-Johnstone-Projective-Splitting-Rates,2017-Eckstein-Simplified-Block-Op-Splitting-ADMM}}.\\
	\ \\ 	 
 
 	{\bf Advantages of Asynchronous Algorithms.}  		
 	Asynchronous methods have several desirable features. First, each processing node performs its computations independently whereas traditionally when iterative methods are executed using multiple nodes the speed is limited by the slowest node.
 	This means that asynchronous approaches may reduce the average time required to generate successive updates, especially when load-balancing differences do not occur among the nodes.
 	Second, asynchronous methods are robust to failed transmissions through a network. In a synchronous model, if the output from a slave node is sent but does not make it to the master node used to generate a new iterate $x^{k+1}$, then the processing node must perform its computation and send its output again before $x^{k+1}$ can be computed. However, in an asynchronous model, iterates are continually generated and need not be in a specific order.
 	Lastly, this approach may simplify the software code to implement such algorithms since if we are able to estimate the bound on delays, we do not need to keep track of when information is received from each node. Instead, we need only fetch the most recent output, compute the update $x^{k+1}$, and then send this back to the node for the next computation (c.f. Algorithm \ref{alg: ASI-implementation} below).
 	\\ 	
 	
 	{\bf Our Contribution.}
 	This work establishes the convergence of a general partially asynchronous iterative algorithmic framework in Hilbert space for common fixed points problems. 
 	This allows out-of-date iterates to be used when there is an upper bound on the delays. 
 	In practical terms, this enables processing nodes to run in parallel while still executing an inherently sequential algorithm.  
 	And, this allows for processors to be well-utilized even without load-balancing.	 	
 	The present work is distinct from that of \cite{1992-Elsner} since there the operators were paracontractions
 	on finite-dimensional Euclidean space while we use nonexpansive operators in Hilbert space of possibly infinite dimension.
 	Moreover, our algorithmic structure includes an inertial term and our analysis takes a  different approach.   
 	This work is chiefly an extension of the ARock algorithm in the context of convex feasibility problems.
 	The ARock algorithm \cite{2016-ARock-Original,2016-Hannah-Unbounded} and results in \cite{2016-Hannah-Unbounded} generate a sequence stochastically while  the current work need not generate iterations stochastically. 
 	Our work also differs from \cite{1989-Bertsekas} since there fixed points were found for the special case of a nonexpansive operator with respect to the max norm on a Euclidean space. \\ 
    
    {\bf Outline.} In Section \ref{sec: CFP}, we present the framework for the problem at hand and define the notation for asynchrony used in this work.
    Our proposed algorithm is given in Section \ref{sec: ASI-alg}. Section \ref{sec: math-results} presents our mathematical analysis, which leads to the proof of the main result, Theorem \ref{theorem: main-result}.
    We then direct our attention to apply Theorem \ref{theorem: main-result} to solving large systems of linear equations in Section \ref{sec: linear-systems}.
    There we show that the Diagonally Relaxed Orthogonal Projection (DROP) algorithm of Censor, Elfving, Herman, and Nikazad \cite{2008-Censor-DROP}  and Kaczmarz's algorithm \cite{1937-Kaczmarz} for linear systems can be used within the ASI framework and provide a pseudo-code sample implementation of the ASI algorithm with this application in mind.
    A computational example is provided in Section \ref{sec: example} applying the ASI algorithm with DROP to a model CT image reconstruction problem.
    We make some concluding remarks in Section \ref{sec: conclusion}.\\

    \section{Convex Feasibility Problem in a Common Fixed Points Framework}  
    \label{sec: CFP}    
    Let $\{C_i\}_{i=1}^m$ be a finite family of closed convex sets  in a  Hilbert space $\mathcal{H}$ with the inner product $\braket{\cdot,\cdot}$ and norm $\|\cdot\|$, respectively. The associated {\it convex feasibility problem} (CFP) is
    \begin{equation}
    	\mbox{Find\ } x^*  \in C:= \bigcap_{i=1}^m C_i,
    	\label{eq: CFP}
    \end{equation} 
    where $C \neq \emptyset$.
    A common approach to solving (\ref{eq: CFP}) is to generate a sequence of iterates $\{x^k\}_{k\in \N}$ in  $\H$  with $x^1 \in \H$ arbitrary via an iterative process
    \begin{equation}
    	x^{k+1} := Q_k\left(x^k\right),\ \ \mbox{for all $k\in \N$,}
    \end{equation}
    where $\{Q_k \}_{k\in\N}$ is a sequence of operators. 
    In this work, we instead use an iteration of the form
    \begin{equation}
    	x^{k+1} := F_k\left( x^k, \hat{x}^k \right),
    	\ \ \mbox{for all $k\in \N$,}
    \end{equation}
    where $\{ F_k  \}_{k\in\N}$ is a sequence of mappings from $\H\times \H$ to $\H$ and $\hat{x}^k$ either equals $x^k$, or equals a previous iterate $x^{k-j}$ for some $j\in\N$.
    This enables out-of-date information to be used in creating successive iterates. \\ 
     
 	Suppose we have a collection of $w$ processing nodes. For each iteration index $k$, we let $d^k \in \mathbb{Z}_{\geq 0}^w$ give the delay information for each of the nodes. This means that if the last output from the $i$-th node $N_i$ was computed using the iterate $x^{k-j}$, then the $i$-th component of $d^k$ gives this delay amount, i.e., $(d^k)_i = j$. 
 	Let $\{i_k\}_{k\in \N}$ be an index sequence identifying the index of the operator whose output will be used to compute $x^{k+1}$, where $i_k \in \{1,2,\ldots,m\}$ for all $k\in\N$. 
 	Then $\hat{x}^k$ is the last iterate sent to the $i_k$-th node  and we write
 	\begin{equation}
 		\hat{x}^k := x^{k-(d^k)_{i_k}},
 		\ \ \mbox{for all $k\in \N$.}
 	\end{equation}
 	For example, if $k=12$ is the current iterative step, if $i_{12} = 4$, and if the last output from the 4-th node was generated using an iterate that is now 3 iterative steps out-of-date, then
 	\begin{equation}
 		(d^k)_{i_k}
 		= (d_{12})_{4}
 		= 3,
 	\end{equation}
 	and so
 	\begin{equation}
 		\hat{x}^k
 		= x^{k-(d^k)_{i_k}}
 		= x^{12-3}
 		= x^9.
 	\end{equation}
 	This shows that the iterate $x^{13}$ will be computed using $x^{12}$ and $x^9$.  \\
 	
 	The following definitions will be used in the sequel.
 	
 	\begin{definition}
 		Let $\H$ be a Hilbert space and $D\subseteq \H$ be nonempty. Then an operator $T:D\rightarrow\H$ is
 		\begin{enumerate}[label = (\roman*)]
 			\item
 				{\it firmly nonexpansive} if
 				\begin{equation}
 					\forall \ x,y \in D, \ \ \
 					\|T(x)-T(y)\|^2 \leq \braket{x-y, T(x)-T(y)};
 				\end{equation}
 			\item
 				{\it nonexpansive} if $T$ is Lipschitz with constant 1 so that
 				\begin{equation}
 					\forall \ x,y \in D, \ \ \
 					\|T(x)-T(y)\|\leq \|x-y\|;
 					\label{eq: NE-def}
 				\end{equation}
 			\item
 				{\it quasi-nonexpansive} if, denoting its fixed points set $\fix{T} := \{ x\in \H \mid x = T(x)  \}$,
 				\begin{equation}
 					\forall \ x \in D, \ y \in \fix{T}, \ \ \
 					\|T(x)-y\|\leq \|x-y\|.
 					\label{eq: qNE-def}
 				\end{equation}
 		\end{enumerate}
 	\end{definition}
 	
 	We say that $T$ is {\it strictly nonexpansive} 
 	\rev{if the inequality in (\ref{eq: NE-def}) is strict whenever $x-y\neq T(x)-T(y)$, and we say that $T$ is {\it strictly quasi-nonexpansive} if the inequality in (\ref{eq: qNE-def})  is strict whenever $x \notin \fix{T}$.}
 	
 		\rev{More information on firmly nonexpansive operators can be found in the book \cite{1984-Goebel-Book}, see, in particular, page 42 there, and in modern books like \cite{2012-Cegielski-Iterative-methods}.}
 	
 	\begin{definition}
 		\label{def: relaxed-operator}
 		\rev{Let $T:\H\rightarrow\H$ and $\alpha \in [0,2]$. The operator $T_\alpha:\H\rightarrow\H$ defined by
 		$T_\alpha := (1-\alpha) \I + \alpha T$ is called an $\alpha$-relaxation of the operator $T$, where $\I$ is the identity operator.}
 	\end{definition}
 	
 	\begin{definition}
 		\label{def: averaged-operator}
 		\rev{If an operator $Q:\H\rightarrow\H$ is an $\alpha$-relaxation of a nonexpansive operator $T:\H\rightarrow\H$ with $\alpha \in (0,1)$, then $Q$ is said to be $\alpha$-averaged. 
 		And, if the particular value of $\alpha$ is not important, then we simply write that $Q$ is averaged.} 		
 	\end{definition} 
 	
 		\rev{The notion of an averaged operator dates back as early as the work \cite{1978-Bailion-Asymptotic-NE-Mappings} where it was called an averaged mapping, and these terms are both commonly used today.
 		However, for ease of expression in our analysis below, we will often refer to the $\lambda$-relaxation of a nonexpansive operator (which yields an averaged operator for our choices of $\lambda$).
 		}
 	
 	\begin{definition}
 		A {\it  paracontraction} is a continuous strictly quasi-nonexpansive operator.
 	\end{definition}
 	
 		 More information on paracontractions can be found in the paper \cite{1996-Paracontractions-FNE-Operators}. \\
 	
 		The collections of paracontractions and of nonexpansive operators form partially overlapping but distinct subsets of the collection of quasi-nonexpansive operators.
 		Their intersection is nonempty since projections are contained in each of them.
 		However, the identity mapping and reflections are nonexpansive, but are not paracontractions.
 		Additionally, a paracontraction need not be Lipschitz continuous. 
 	
 	The following definition follows \cite[Definition 5.1.1]{1997-Censor-parallel-book}. 
 	
 	\begin{definition}
 		A sequence $\{i_k\}_{k\in \N} $ is called an {\it almost cyclic control} on 
 		$[m] := \{1,2,\ldots,m\}$ if $i_k\in [m]$ for all $k\in\N$ and there exists an integer $M\geq m$ (called the {\it almost cyclicality constant}) such that, for each $k\in \N$,  $[m] \subseteq \{i_{k+1},i_{k+2},\ldots,i_{k+M}\}$.
 	\end{definition}
 	 
 	\section{The Asynchronous Sequential Inertial (ASI) Algorithm}  
 	\label{sec: ASI-alg}
 	We now describe  our proposed algorithm.
 	Consider a collection of $m$ nonexpansive operators $\{T_i\}_{i=1}^m$ on $\H$ with a common fixed point. We seek to solve (\ref{eq: CFP}) where $C_i = \fix{T_i}$ for all $i \in [m].$ 
 	For notational convenience, we define
 	\begin{equation}
 		S_i := \I - T_{i},
 		\ \ \mbox{for all $i\in[m]$.}
 		\label{eq: Original-Si-def}
 	\end{equation}
 	Our Asynchronous Sequential Inertial (ASI) algorithm is as follows.\\
 	 
	\begin{algorithm}[H]
		\caption{Asynchronous Sequential Inertial (ASI) Algorithm \label{alg: ASI}}
		\normalsize
		Let $x^1 \in \H$ be arbitrary,  $\{\lambda_k\}_{k\in\N}$ be such that $\lambda_k \in (0,1)$ for all $k\in\N$, and $\{i_k\}_{k\in\N}$ be an almost cyclic control on $[m]$. 
		For each $k\in \N$ set
		\begin{equation}
			x^{k+1}
			:= \begin{cases}
			\begin{array}{ll}
			x^k, & \mbox{if $k \leq \sup_{k\in\N}\|d^k\|_\infty$,} \\
			x^k - \lambda_k S_{i_k}\left(\hat{x}^k\right),\ \ & \mbox{otherwise.}
			\end{array}
			\end{cases}
			\label{eq: ASI-algorithm-iteration}
		\end{equation}
	\end{algorithm} 	
 	\ \\
 	
 	In the special case where $\hat{x}^k = x^k$ \rev{and $\lambda_k = \lambda$} for each $k\in \N$ and we have a single operator $T$ (i.e., $m=1$), we obtain
    \begin{equation}
    	x^{k+1} 
    	:= T_\lambda\left(x^k\right)
    	=(1-\lambda) x^k + \lambda T \left(x^k\right),
    	\label{eq: sync-iteration}
    \end{equation}
    which is  precisely the Krasnosel'ski\u{\i}-Mann (KM) iteration, see, e.g., \cite{1953-Mann,1955-Krasnoselskii} for the original works, \rev{Section 5.2 of the book \cite{2017-Bausche-Combettes} for a summary, and \cite{1979-Reich-Weak-Convergence-Theorems} for more information on the KM iteration.}    
    This iteration does not allow any delays and  generates a sequence that weakly converges to a fixed point of $T$.
    The primary result of our current work is Theorem \ref{theorem: main-result}, which states that a sequence $\{x^k\}_{k\in\N}$ generated by the ASI algorithm converges weakly to a solution of (\ref{eq: CFP}) when the sequence of delays $\{d^k\}_{k\in \N}$ is uniformly bounded in the sup norm by some $\tau\geq 0$ and the step sizes $\{\lambda_k\}_{k\in\N}$ are bounded above by $1/(2\tau+1)$. \\

    Before presenting our analysis, we provide a remark and illustration of the ASI algorithm to give the reader some intuition.  The computation of $x^{k+1}$ can be expressed in two parts. The first part is a convex  combination of $x^k$ and $T_{i_k}(\hat{x}^k)$ to form the point
    \begin{equation}
    	y^k := (1-\lambda_k) x^k + \lambda_k T_{i_k}\left(\hat{x}^k\right). 
    \end{equation}
    The second part is an inertial term that estimates the direction of the solution, given $x^k$ and $\hat{x}^k$. 
    The term $y^k$ is added to the inertial term to yield $x^{k+1}$, i.e.,
    \begin{equation}
    	\begin{aligned}
	    	x^{k+1} 
	    	&= x^k - \lambda_k S_{i_k} \left(\hat{x}^k\right) \\
	    	& = x^k - \lambda_k \left( \hat{x}^k - T_{i_k}\left(\hat{x}^k\right)\right)\\
	    	&= \underbrace{(1-\lambda_k) x^k + \lambda_k T_{i_k}\left(\hat{x}^k\right)}_{\mbox{convex combination}} + \underbrace{\lambda_k \left(x^k - \hat{x}^k  \right).}_{\mbox{inertial term}}
	    	\label{eq: update-lin-comb-plus-pert}
    	\end{aligned} 
    \end{equation} 
	Since the iterates are, on average, moving closer to a solution, this inertial term may accelerate the convergence by using previous information along with the current iterate to estimate the direction toward a solution. 
	This effect of inertial terms is known in the literature, see, e.g., \cite{2001-Alvarez,2003-Moudafi-Inertial-Terms,2008-Mainge-KM-Inertial,1964-Polyak,2015-Lorenz-Inertial-Terms}. 		
	The right-hand side of (\ref{eq: update-lin-comb-plus-pert}) also illustrates the distinction of ASI from the algorithm of \cite{1992-Elsner}, which uses only the convex combination part of (\ref{eq: update-lin-comb-plus-pert}) without the inertial term. \\
    
    To illustrate  the ASI algorithm graphically, let $C_1$ and $C_2$ be two closed convex sets with nonempty intersection and let $T_1$ and $T_2$ be relaxations of the projections $P_{C_1}$ and $P_{C_2}$ onto the sets $C_1$ and $C_2$, respectively. 
    In this case, Figure \ref{fig: ASI-algorithm} shows how $x^{k+1}$ is generated from $x^k$ and $\hat{x}^k$. 

   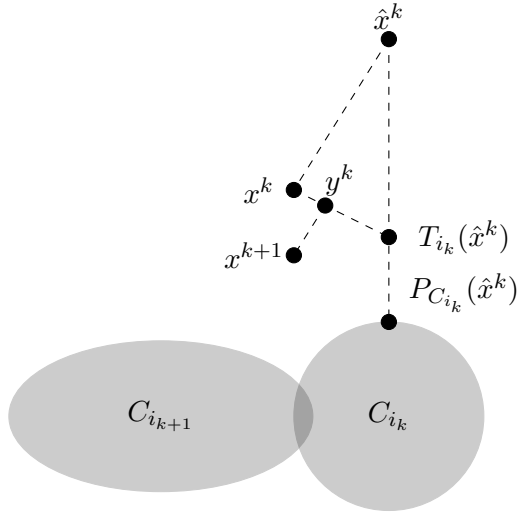
\begin{figure} 
   	\centering 
   	{\large
   		\begin{tikzpicture} 
   		\def\P{1.0}
   		\draw[fill = black, opacity = 0.20] (2,0) circle (1.25*\P);	   		
   		\draw[fill = black, opacity = 0.20] (-1,0) ellipse (2 and 1);
   		 
   		\coordinate (x1) at (2,5);
   		\coordinate (x2) at (0.75,3.0);
   		\coordinate (x3) at (2,1.25);
   		\coordinate (x4) at ($(x1)!0.70!(x3)$);	   		
   		\coordinate (x5) at ($(x2)!0.33!(x4)$);	  
   		\coordinate (pert) at ($(x2)-(x1)$);
   		\coordinate (x6) at ($(x5)+0.33*(pert)$);
   		
   		\draw[dashed] (x1) -- (x3);
   		\draw[dashed] (x1) -- (x2);
   		\draw[dashed] (x2) -- (x4); 
   		\draw[dashed] (x5) -- (x6); 
   		
   		\draw[fill = black] (x2) circle (0.1) node[left] {${x}^k\   $};
   		\draw[fill = black] (x4) circle (0.1) node[right] {$\ \ T_{i_k}(\hat{x}^k)$};	  
   		\draw[fill = black] (x1) circle (0.1) node[above] {$\hat{x}^k$};		
   		\draw[fill = black] (x3) circle (0.1) node[above right] {$\   P_{C_{i_k}}(\hat{x}^k)$};
   		
   		\draw[fill = black] (x5) circle (0.1) node[above] {\ \ \ $y^k$};
   		\draw[fill = black] (x6) circle (0.1) node[left] {$x^{k+1}$\   };  	
   		
   		\draw[] ( 2,0) node[] {$C_{i_k}$};	   			
   		\draw[] (-1,0) node[] {$C_{i_{k+1}}$};	
   		\end{tikzpicture}
   	}
   	\caption{Illustration of a full iteration of the ASI algorithm with two convex sets and the $T_i$'s as relaxed projections onto the sets.}
   	\label{fig: ASI-algorithm}
   \end{figure}
 
    \section{Mathematical Analysis of the ASI Algorithm}
    \label{sec: math-results}
    In this section, we provide several lemmas  culminating in our primary convergence result in Theorem \ref{theorem: main-result}. 
    We begin with the following lemma on  the {\it demi-closedness principle}, which is a slight generalization of Cegielski \cite[Lemma 3.2.5]{2012-Cegielski-Iterative-methods}.

    \begin{lemma}
    	\label{lemma: weak-cluster-fixed-point}
    	Let $\{T_i \}_{i=1}^m$ be a finite family of nonexpansive operators with a common fixed point and let $y\in \H$ be a weak cluster point of a sequence $\{x^k\}_{k\in \N}$. If $\|T_i x^k-x^k\|\rarrow 0$ for all $i \in [m]$, then
    	\begin{equation}
    		y \in \bigcap_{i=1}^m \fix{T_i}.
    	\end{equation}
    \end{lemma}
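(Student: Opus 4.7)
The plan is to reduce the statement to the classical demi-closedness principle applied individually to each operator $T_i$, and then intersect the conclusions. Since $y$ is a weak cluster point of $\{x^k\}_{k\in\N}$, I can extract a subsequence $\{x^{k_j}\}_{j\in\N}$ with $x^{k_j} \rightharpoonup y$ weakly. The hypothesis $\|T_i x^k - x^k\| \rarrow 0$ holds along the full sequence, hence also along every subsequence, so in particular $\|T_i x^{k_j} - x^{k_j}\| \rarrow 0$ for each $i\in[m]$.

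The core step is to show that for each fixed $i\in[m]$, the pair (weak convergence of $x^{k_j}$ to $y$, strong convergence of $x^{k_j} - T_i x^{k_j}$ to $0$) forces $y \in \fix{T_i}$. I would argue by contradiction using Opial's property of Hilbert space: if $y \neq T_i(y)$, then
\begin{equation}
\liminf_{j\to\infty} \|x^{k_j} - y\| < \liminf_{j\to\infty} \|x^{k_j} - T_i(y)\|.
\end{equation}
On the other hand, nonexpansiveness of $T_i$ and the triangle inequality give
\begin{equation}
\|x^{k_j} - T_i(y)\| \leq \|x^{k_j} - T_i(x^{k_j})\| + \|T_i(x^{k_j}) - T_i(y)\| \leq \|x^{k_j} - T_i(x^{k_j})\| + \|x^{k_j} - y\|.
\end{equation}
Since the first term on the right tends to zero, taking $\liminf$ yields
\begin{equation}
\liminf_{j\to\infty} \|x^{k_j} - T_i(y)\| \leq \liminf_{j\to\infty} \|x^{k_j} - y\|,
\end{equation}
contradicting the strict Opial inequality. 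Hence $y = T_i(y)$, so $y\in \fix{T_i}$.

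Because this argument applies to every $i\in[m]$ with the same subsequence and the same weak limit $y$, I conclude that $y \in \bigcap_{i=1}^m \fix{T_i}$, which is the desired conclusion. The main (only) delicate point is invoking Opial's inequality correctly and making sure the strong convergence $\|T_i x^{k_j} - x^{k_j}\| \rarrow 0$ is used precisely to kill the extra term produced by the triangle inequality; everything else is bookkeeping. Since this is essentially the statement of Cegielski's Lemma 3.2.5 applied once per operator (the ``slight generalization'' being merely that a finite family is handled simultaneously), no new analytical ingredient beyond the standard demi-closedness principle is required.
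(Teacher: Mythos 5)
Your proof is correct and follows essentially the same route as the paper: extract a weakly convergent subsequence, combine nonexpansiveness with the triangle inequality and the vanishing residual $\|T_i x^{k_j}-x^{k_j}\|\rarrow 0$ to get $\liminf_j\|x^{k_j}-T_i(y)\|\leq\liminf_j\|x^{k_j}-y\|$, and contradict Opial's strict inequality if $T_i(y)\neq y$. The only difference is cosmetic — the paper starts from $\|x^{n_k}-y\|\geq\|T_i x^{n_k}-T_i y\|$ and applies the reverse triangle inequality, whereas you bound $\|x^{k_j}-T_i(y)\|$ directly — but the two chains are algebraically equivalent.
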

    \begin{proof}
    	By hypothesis, there is a subsequence $\{x^{n_k}\}_{k\in\N} \subseteq \{x^k\}_{k\in\N}$ such that $x^{n_k}\warrow y$.
    	Pick any $j \in [m]$.    	
    	Then, using the triangle inequality, we deduce
    	\begin{equation}
    		\begin{aligned}
	    		\liminf_{k\rightarrow\infty}
	    		\| x^{n_k} - y\|
	    		&\geq 
	    		\liminf_{k\rightarrow\infty}
	    		\|T_j x^{n_k} -T_j y\| \\
	    		& = \liminf_{k\rightarrow\infty}
	    		\|T_j x^{n_k} - x^{n_k} + x^{n_k}- T_j y\|  \\
	    		& \geq \liminf_{k\rightarrow\infty}
	    		\left(\| x^{n_k}- T_j y\| - \|T_j x^{n_k} - x^{n_k}\|    \right) \\
	    		& = \liminf_{k\rightarrow\infty}
	    		 \| x^{n_k}- T_j y\| . \\	    		
    		\end{aligned} 		
    		\label{eq: lemma-1-1}
    	\end{equation}
    	A 1967 lemma of Opial, see, e.g.,	\cite[Lemma 3.2.4]{2012-Cegielski-Iterative-methods}, states that if $z^k \warrow z \in \H$ and $z' \in \H$ with $z\neq z'$, then
    	\begin{equation}
    		\liminf_{k\rightarrow\infty} \|z^k - z'\|
    		>\liminf_{k\rightarrow\infty} \|z^k - z\|.
    		\label{eq: lemma-1-2}
    	\end{equation}
    	Consequently, if $T_jy \neq y$, then (\ref{eq: lemma-1-1}) and (\ref{eq: lemma-1-2}) together imply
    	\begin{equation}
    		\liminf_{k\rightarrow\infty}
    		\| x^{n_k} - y\|
    		\geq  \liminf_{k\rightarrow\infty}
    		\| x^{n_k}- T_j y\|
    		> \liminf_{k\rightarrow\infty}
    		\| x^{n_k} - y\|,
    	\end{equation}
    	a contradiction. Whence, $T_j y = y$ for each $j$ and the result follows.
    	\qed
    \end{proof}
    
    This lemma helps obtain the following proposition, which is key in obtaining our main result. 
    The second half of the proof of Proposition \ref{proposition: weak-convergence} is based on the result found in \cite[Corollary 3.3.3]{2012-Cegielski-Iterative-methods}, which is credited there  to Bauschke and Borwein \cite[Theorem 2.16(ii)]{1996-bauschke-projection}.
    This corollary in \cite{2012-Cegielski-Iterative-methods} established that Fej\'er monotone sequences with respect to a closed set have at most one weak cluster point. Their proof idea  is modified below to apply to the current setting. \rev{In connection to this proof, we refer the reader also to the papers \cite{1979-Reich-Weak-Convergence-Theorems} and \cite{1983-Reich-Note-Mean-Ergodic-Theorem}.}
    
	\begin{proposition}
		\label{proposition: weak-convergence}
		Let $\{T_i\}_{i=1}^m$ be a family of nonexpansive operators on $\H$ with a common fixed point and let $\{x^k\}_{k\in\N}$ be a sequence in $\H$.
		If  for every
		\begin{equation}
			z \in C := \bigcap_{i=1}^m \fix{T_i} 
		\end{equation}
		the sequence $\{\|x^k-z\|\}_{k\in\N}$ converges and
		$\|T_i x^k-x^k\|\rarrow 0$ for all $i\in [m]$, then the sequence
		$\{x^k\}_{k\in\N}$ converges weakly to some $x^* \in C$.
	\end{proposition}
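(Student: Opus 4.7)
The plan is to establish weak convergence through a standard two-step argument: first show that every weak cluster point of $\{x^k\}_{k\in\N}$ belongs to $C$, and then show that the sequence has exactly one weak cluster point. Once uniqueness of the weak cluster point is established, weak convergence follows immediately since $\{x^k\}_{k\in\N}$ is bounded (the convergence of $\{\|x^k-z\|\}$ for any fixed $z \in C$ ensures boundedness, and in a Hilbert space bounded sequences have weakly convergent subsequences by the Banach--Alaoglu theorem).

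For the first step, I would just invoke the previous lemma. Boundedness provides at least one weak cluster point $y$, and since $\|T_i x^k - x^k\| \rarrow 0$ for all $i \in [m]$ by hypothesis, Lemma \ref{lemma: weak-cluster-fixed-point} applies directly to conclude $y \in C$.

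The main step, and the one requiring the most care, is establishing uniqueness of the weak cluster point via Opial's lemma. Suppose for contradiction that $\{x^k\}_{k\in\N}$ admits two distinct weak cluster points $u$ and $v$, realized by subsequences $x^{n_k} \warrow u$ and $x^{m_k} \warrow v$. By the first step, both $u,v \in C$, so by the assumed convergence hypothesis, both $\{\|x^k - u\|\}$ and $\{\|x^k - v\|\}$ converge; call their limits $\alpha$ and $\beta$. Since the full sequences converge, the limits along any subsequence coincide with $\alpha$ and $\beta$. Applying Opial's inequality (\ref{eq: lemma-1-2}) to the subsequence $\{x^{n_k}\}$ with $z = u$ and $z' = v$ yields
\begin{equation}
\alpha = \liminf_{k\to\infty} \|x^{n_k} - u\| < \liminf_{k\to\infty} \|x^{n_k} - v\| = \beta,
\end{equation}
while applying it to $\{x^{m_k}\}$ with $z = v$ and $z' = u$ gives $\beta < \alpha$, a contradiction.

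The hardest part is essentially bookkeeping: making sure the identifications between full-sequence limits and subsequential liminfs are rigorously justified (which is automatic since $\{\|x^k-u\|\}$ and $\{\|x^k-v\|\}$ converge as full sequences), and carefully applying Opial's lemma in the right direction for each subsequence. With these in hand, the proposition follows: $\{x^k\}_{k\in\N}$ is bounded, every weak cluster point lies in $C$, and there is only one such weak cluster point, so $x^k \warrow x^* \in C$. \qed
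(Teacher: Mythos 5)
Your proof is correct, and the first step (boundedness, existence of a weak cluster point, and membership in $C$ via Lemma \ref{lemma: weak-cluster-fixed-point}) coincides with the paper's. Where you genuinely diverge is in the uniqueness step. The paper follows Bauschke--Borwein: it defines $f(z) := \lim_{k} \left( \|x^k - z\|^2 - \|z\|^2 \right)$ on $C$, observes that $\|x^k-z\|^2 - \|z\|^2 = \|x^k\|^2 - 2\braket{x^k,z}$, and deduces that $\lim_k \braket{x^k, p-q}$ exists for two putative cluster points $p,q \in C$; evaluating this limit along the two subsequences forces $\braket{p,p-q} = \braket{q,p-q}$, hence $\|p-q\|^2 = 0$. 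You instead apply Opial's inequality (\ref{eq: lemma-1-2}) twice, once along each subsequence, and use the convergence of the full sequences $\{\|x^k-u\|\}$ and $\{\|x^k-v\|\}$ to identify the subsequential liminfs with $\alpha$ and $\beta$, arriving at $\alpha < \beta < \alpha$. Both arguments are complete and use exactly the hypothesis that $\{\|x^k-z\|\}$ converges for \emph{every} $z \in C$. Yours is arguably more economical here, since Opial's lemma is already on the table from the proof of Lemma \ref{lemma: weak-cluster-fixed-point}; the paper's inner-product computation has the advantage of not relying on the Opial property at all, which makes it the more robust template outside the Hilbert space setting (where Opial's property can fail but Fej\'er-type uniqueness arguments survive). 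One cosmetic remark: the weak sequential compactness of bounded sets in $\H$ is more directly a consequence of reflexivity (or can be cited as in the paper) than of Banach--Alaoglu, which gives weak-* compactness rather than sequential extraction; this does not affect the validity of your argument.
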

	
	\begin{proof}
		Let $z\in C$.
		The triangle inequality and the convergence of $\{\|x^k-z\|\}_{k\in\N}$   imply that the sequence $\{x^k\}_{k\in\N}$ is bounded. Consequently, it has a weakly convergent subsequence $\{x^{n_k}\}_{k\in\N}\subseteq \{x^k\}_{k\in\N}$.
		For each such subsequence, we may apply Lemma \ref{lemma: weak-cluster-fixed-point} to assert the weak cluster point is contained in  $C$.
		We show below that the sequence $\{x^k\}_{k\in\N}$ has at most one weak cluster point in $C$.
		Thus, there is precisely one weak cluster point of $\{x^k\}_{k\in\N}$ and it is contained in $C$, from which the result will follow. \\
		
		It remains to verify that there is at most one cluster point of $\{x^k\}_{k\in\N}$ in $C$. 
		Define the function $f:C\rightarrow\R$ by
		\begin{equation}
			f(z) := \limk \left( \|x^k-z\|^2-\|z\|^2 \right),
		\end{equation}
		which, by hypothesis, converges for each $z \in C$. 
		Note that
		\begin{equation}
			 \|x^k-z\|^2-\|z\|^2
			= \left(\|x^k\|^2 - 2\braket{x^k,z} + \|z\|^2\right) -\|z\|^2
			= \|x^k\|^2 - 2\braket{x^k,z}.
		\end{equation}
		Now let $\{x^{m_k}\}_{k\in\N}$ and $\{x^{n_k}\}_{k\in\N}$ be subsequences of $\{x^k\}_{k\in\N}$ weakly converging to two distinct limit points $p$ and $q$ in $C$, respectively. 
		Then 
		\begin{equation}
			\begin{aligned}
				2\limk \braket{x^k, p-q}
				&= \limk \Big(\left( \|x^k\|^2 - 2\braket{x^k,q} \right)-\left( \|x^k\|^2 - 2\braket{x^k,p} \right) \Big)\\
				&= f(q) - f(p),
			\end{aligned}
			\label{eq: prop-1}
		\end{equation}
		i.e., the limit exists. However,
		\begin{equation}
			\limk \braket{x^{m_k}, p-q}
			= \braket{p,p-q},
			\label{eq: prop-2}
		\end{equation}
		and
		\begin{equation}
			\limk \braket{x^{n_k}, p-q}
			= \braket{q,p-q}.
			\label{eq: prop-3}
		\end{equation}
		Since the limit in (\ref{eq: prop-1}) exists, the limits in (\ref{eq: prop-2}) and (\ref{eq: prop-3}) must be equal, which implies
		\begin{equation}
			\braket{p,p-q} = \braket{q,p-q}
			\ \ \ \Longrightarrow \ \ \ 
			\|p-q\|^2 = 0
			\ \ \ \Longrightarrow \ \ \ 
			p = q.
		\end{equation}
		This shows the weak cluster point is unique and completes the proof. 
		\qed
	\end{proof} 
 	The above proposition is essential for our convergence result.
 	All our subsequent lemmas compiled together show that the assumptions of Proposition \ref{proposition: weak-convergence} hold for sequences generated by the ASI algorithm. \\
 	
    We outline the remainder of this section as follows.    
    We first give in Lemma \ref{lemma: fundamental-inequality}  a fundamental inequality about sequences $\{x^k\} _{k\in\N}$ generated by the ASI algorithm.
    This inequality is used in Lemma \ref{lemma: xi-convergence} to show that the sequence $\{\xi_k\}_{k\in\N}$ converges, where $\xi_k$ is a sum of the classical distance $\|x^k-z\|$ for some $z\in C$ and finitely many terms of the form $c_i \|x^{k+1-i}- x^{k-i}\|\rev{^2}$. 
    We then use the inequality  (\ref{eq: xik-inequality}) in the proof of this lemma to verify that $\|x^{k+1}-x^k\|\rarrow 0$ and that the sequence $\{\|x^k-z\|\}_{k\in\N}$ converges for each $z\in C$. 
    Following this,  in Lemma \ref{lemma: T-operator-residual-convergence} we prove that $\|T_ix^k-x^k\|\rarrow 0$ for each $i\in[m]$.
    With these results, we  show that the hypotheses of Proposition \ref{proposition: weak-convergence} hold for  any sequence $\{x^k\} _{k\in\N}$ generated by the ASI algorithm. 
 	\rev{In what follows, we always make the following assumption.} \\
 	\rev{\newline{\bf Assumption 1}
 			The delay vectors $\{d^k\}_{k\in\N}$ are uniformly bounded in sup norm by some $\tau \geq 0$.}

    \begin{lemma}
    	\label{lemma: fundamental-inequality}
    	Let  $z \in C$ and let $\mu > 0$ \rev{and suppose Assumption 1 holds. 
    	If $\{x^k\}_{k\in\N}$ is any sequence generated by the ASI algorithm, then}  
    	\begin{equation}
    		\begin{aligned}
	    		\left\|x^{k+1}-z\right\|^2
	    		& \leq \left\|x^k-z\right\|^2 + \mu \sum_{\ell=1}^\tau   \left\|x^{k+1-\ell}-x^{k-\ell}\right\|^2 \\
	    		&  - \lambda_k \left\|S_{i_k}\left(\hat{x}^k\right)\right\|^2\Big( 1 - \lambda_k \left( 1+ \tau/\mu \right)  \Big).
    		\end{aligned}
    	\end{equation}
    \end{lemma}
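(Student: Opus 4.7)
The plan is to start from the identity $x^{k+1}-z = (x^k-z) - \lambda_k S_{i_k}(\hat{x}^k)$ (which holds for $k$ past the initialization phase; for smaller $k$ the iterate is stationary and the inequality is trivial), and expand the square:
\begin{equation*}
\|x^{k+1}-z\|^2 = \|x^k-z\|^2 - 2\lambda_k \langle x^k-z,\, S_{i_k}(\hat{x}^k)\rangle + \lambda_k^2 \|S_{i_k}(\hat{x}^k)\|^2.
\end{equation*}
I would then split the cross term via $x^k-z = (\hat{x}^k-z) + (x^k-\hat{x}^k)$ and estimate the two resulting inner products separately.

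For $\langle \hat{x}^k-z,\,S_{i_k}(\hat{x}^k)\rangle$ I would invoke the polarization identity $2\langle a,b\rangle = \|a\|^2+\|b\|^2-\|a-b\|^2$ with $a=\hat{x}^k-z$ and $b = \hat{x}^k - T_{i_k}\hat{x}^k$, so $a-b = T_{i_k}\hat{x}^k - z$. Since $z \in \fix{T_{i_k}}$ and $T_{i_k}$ is nonexpansive, $\|T_{i_k}\hat{x}^k - z\| \leq \|\hat{x}^k - z\|$, which yields the standard bound $2\langle \hat{x}^k-z, S_{i_k}(\hat{x}^k)\rangle \geq \|S_{i_k}(\hat{x}^k)\|^2$. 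Hence the contribution of this piece to $\|x^{k+1}-z\|^2$ is at most $-\lambda_k \|S_{i_k}(\hat{x}^k)\|^2$.

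For the delay-induced cross term $-2\lambda_k\langle x^k-\hat{x}^k,\, S_{i_k}(\hat{x}^k)\rangle$ I would apply Young's inequality with parameter $\alpha = \mu/(\lambda_k \tau)$, giving
\begin{equation*}
-2\lambda_k\langle x^k-\hat{x}^k, S_{i_k}(\hat{x}^k)\rangle \leq \frac{\mu}{\tau}\|x^k-\hat{x}^k\|^2 + \lambda_k^2 \frac{\tau}{\mu}\|S_{i_k}(\hat{x}^k)\|^2.
\end{equation*}
To turn $\|x^k-\hat{x}^k\|^2$ into the required sum of consecutive increments, I would write the telescoping $x^k - \hat{x}^k = \sum_{\ell=1}^{(d^k)_{i_k}}(x^{k+1-\ell}-x^{k-\ell})$, observe that the number of terms is at most $\tau$ by Assumption~1, and apply the Cauchy--Schwarz (Jensen) bound $\|\sum_{\ell=1}^j v_\ell\|^2 \leq j\sum_{\ell=1}^j \|v_\ell\|^2 \leq \tau \sum_{\ell=1}^\tau \|x^{k+1-\ell}-x^{k-\ell}\|^2$. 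The $\tau$'s then cancel the $1/\tau$ in the Young estimate and produce exactly $\mu\sum_{\ell=1}^\tau \|x^{k+1-\ell}-x^{k-\ell}\|^2$.

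Collecting the three pieces, the coefficient of $\|S_{i_k}(\hat{x}^k)\|^2$ is $-\lambda_k + \lambda_k^2(\tau/\mu) + \lambda_k^2 = -\lambda_k(1-\lambda_k(1+\tau/\mu))$, which matches the statement. The main delicate point is the choice of Young's constant $\alpha=\mu/(\lambda_k\tau)$: it must be calibrated so that the subsequent Jensen step absorbs the factor $\tau$ cleanly while leaving the $S_{i_k}$ coefficient in the exact form $\lambda_k^2\tau/\mu$; a degenerate $\tau=0$ case requires noting only that $x^k=\hat{x}^k$, so the cross term vanishes and the stated inequality holds trivially with the convention that the empty sum is zero.
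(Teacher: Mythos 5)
Your proposal is correct and follows essentially the same route as the paper's proof: the same expansion of the square, the same splitting of the cross term at $\hat{x}^k$, and the same telescoping-plus-Young treatment of the delay term, arriving at the identical intermediate bounds $-\lambda_k\|S_{i_k}(\hat{x}^k)\|^2$ and $\frac{\tau\lambda_k^2}{\mu}\|S_{i_k}(\hat{x}^k)\|^2+\mu\sum_{\ell=1}^{\tau}\|x^{k+1-\ell}-x^{k-\ell}\|^2$. The only cosmetic differences are that the paper obtains the first bound by invoking firm nonexpansiveness of $\tfrac{1}{2}(\I-T_{i_k})$ rather than re-deriving it via polarization, and applies Young's inequality termwise inside the telescoped sum rather than once globally followed by the Jensen/Cauchy--Schwarz step.
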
 
    \begin{proof}
    	First observe that
    	\begin{equation}
    		\begin{aligned}
	    		\|x^{k+1}-z\|^2
	    		&= \|x^{k} - \lambda_k \Skx - z\|^2\\
	    		&= \|x^k - z\|^2 - 2\lkx\braket{x^k-z,\Skx} + \lkx^2 \|\Skx\|^2.
    		\end{aligned}
    		\label{eq: xk-expanded}
    	\end{equation}
    	We may split the cross-term into the two expressions $\alpha_k$ and $\beta_k$ via
    	\begin{equation}
    		-2\lambda_k \braket{\Skx , x^k-z}
    		= \underbrace{-2\lambda_k \braket{\Skx, \hat{x}^k-z}}_{\alpha_k} + \underbrace{  2\lambda_k \braket{\Skx, x^k - \hat{x}^k} }_{\beta_k}.
    	\end{equation}
    	Note that $\frac{1}{2}S_{i_k} = \frac{1}{2}\left(\I-T_{i_k}\right)$ is firmly nonexpansive and that $\frac{1}{2}S_{i_k}(z)  = 0$.
    	Thus,
    	\begin{equation} 
    		\begin{aligned}
    			\|\Skx\|^2
    			& = 4 \|\frac{1}{2}\Skx\|^2 \\
    			& = 4 \|\frac{1}{2}\Skx - \frac{1}{2}S_{i_k}(z)\|^2 \\
    			& \leq 4 \braket{\frac{1}{2}\Skx - \frac{1}{2}S_{i_k}(z), \hat{x}^k-z} \\
    			& = 2\braket{\Skx, \hat{x}^k-z}.
    		\end{aligned}
    	\end{equation}
    	This implies
    	\begin{equation}
    		\alpha_k = -2\lambda_k \braket{\Skx, \hat{x}^k-z}
    		\leq -\lambda_k \|\Skx \|^2.
    		\label{eq: 1-alpha-inequality}
    	\end{equation}
    	Application of the triangle inequality and the fact that $\|d^k\|_\infty\leq \tau$ for all $k\in\N$ yield
    	\begin{equation}
    		\begin{aligned}
    			\beta_k
    			& =- 2\lkx \braket{\Skx,x^k-\hat{x}^k} \\
    			& = - 2\lkx \sum_{\ell=1}^{(d^k)_{i_k}} \braket{\Skx, x^{k+1-\ell} - x^{k-\ell}} \\
    			& \leq   2\lkx \sum_{\ell=1}^{(d^k)_{i_k}} \|\Skx\| \|x^{k+1-\ell} - x^{k-\ell}\| \\
    			& \leq  2\lkx \sum_{\ell=1}^{\tau} \|\Skx\| \|x^{k+1-\ell} - x^{k-\ell}\|.    			
    		\end{aligned}
    		\label{eq: fund-inequality-proof-1}
    	\end{equation}
    	Note that the second line in (\ref{eq: fund-inequality-proof-1}) holds since the sum is telescoping and that the final line holds since $\|d^k\|_\infty \leq \tau$.
    	Using the fact that $0 \leq (a-b)^2 = a^2+b^2-2ab$ for $a,b\in\R$ implies $ab \leq \frac{1}{2}(a^2+b^2)$, we deduce
    	\begin{equation}
    		\begin{aligned}
    			\beta_k
    			& \leq   \lkx \sum_{\ell=1}^{\tau} \dfrac{\lkx}{\mu}\|\Skx\|^2 + \dfrac{\mu}{\lkx} \|x^{k+1-\ell} - x^{k-\ell}\|^2 \\ 
    			& = \dfrac{\tau \lkx^2}{\mu } \|\Skx\|^2   + \mu \sum_{\ell=1}^\tau  \|x^{k+1-\ell} - x^{k-\ell}\|^2.
    		\end{aligned}
    		\label{eq: 2-beta-inequality}
    	\end{equation}
    	Combining (\ref{eq: xk-expanded}), (\ref{eq: 1-alpha-inequality}) and (\ref{eq: 2-beta-inequality}), we obtain the desired result.    	
    	\qed
    \end{proof}
    
    To prove the convergence of iterates to solutions of fixed point problems, we typically need some sort of monotonicity. However, when using out-of-date iterates to generate the sequence, the inequality $\|x^{k+1}-z\|\leq \|x^k-z\|$ does {\it not} necessarily hold. 
    In \cite{2016-Hannah-Unbounded}, the authors were able to construct a sequence that is monotonic in expectation by   including both the classical error $\|x^k-z\|$ and terms of the form $c_\ell \|x^{k+1-\ell}-x^{k-\ell}\|$. We also use the idea of adding terms of this form with the inequality in Lemma \ref{lemma: fundamental-inequality} to obtain convergence  of a sequence that sums the classical error and a finite number of these discrepancy terms. This is stated formally in the following lemma.
  
 	\begin{lemma}
 		\label{lemma: xi-convergence}
 		Let $z\in C$ and let $\mu > 0$ \rev{and suppose Assumption 1 holds.
 		Let $\{x^k\}_{k\in\N}$ be any sequence generated by the ASI algorithm.
 		Assume there is $\varepsilon > 0$ such that}
 		\begin{equation}
 		0 <  \varepsilon \leq  \lambda_k \leq \dfrac{1}{1 + \tau(1/\mu + \mu) + \varepsilon},
 		\ \ \mbox{for all $k\in\N$.} 
 		\label{eq: lambda-step-size-bd-mu}
 		\end{equation} 		 		
 		The sequence $\{\xi_k\}_{k\in\N}$ defined by
 		\begin{equation}
 			\xi_k := \|x^k-z\|^2 + \sum_{\ell=1}^\tau c_\ell \|x^{k+1-\ell}-x^{k-\ell}\|^2,
 			\label{eq: xi-k-def}
 		\end{equation} 
 		where
 		\begin{equation}
 			c_j : = (\tau + 1 - j)\mu + \varepsilon,
 			\ \ \ \mbox{for all $j\in [\tau+1]$,}			
 		\end{equation}
 		is a convergent sequence.			
 	\end{lemma}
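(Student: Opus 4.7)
The plan is to show that $\{\xi_k\}_{k\in\N}$ is eventually monotone non-increasing and bounded below, hence convergent. Since $\xi_k \geq 0$ always, lower-boundedness is immediate; the real work is in establishing monotonicity.

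First I would compute the difference $\xi_{k+1}-\xi_k$ directly from the definition in (\ref{eq: xi-k-def}). Applying Lemma \ref{lemma: fundamental-inequality} to bound $\|x^{k+1}-z\|^2 - \|x^k-z\|^2$ gives
\begin{equation}
\xi_{k+1}-\xi_k \leq \mu\sum_{\ell=1}^\tau \|x^{k+1-\ell}-x^{k-\ell}\|^2 - \lambda_k\|\Skx\|^2\bigl(1-\lambda_k(1+\tau/\mu)\bigr) + \Delta_k,
\end{equation}
where $\Delta_k := \sum_{\ell=1}^\tau c_\ell\|x^{k+2-\ell}-x^{k+1-\ell}\|^2 - \sum_{\ell=1}^\tau c_\ell \|x^{k+1-\ell}-x^{k-\ell}\|^2$ collects the discrepancy-term shifts.

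The key algebraic step is to re-index the first sum in $\Delta_k$ via $\ell \mapsto \ell+1$, so that $\Delta_k$ telescopes up to boundary contributions. Since $c_{\ell+1}-c_\ell = -\mu$ and $c_\tau = \mu+\varepsilon$, I would obtain
\begin{equation}
\Delta_k = c_1\|x^{k+1}-x^k\|^2 - \mu\sum_{\ell=1}^{\tau-1}\|x^{k+1-\ell}-x^{k-\ell}\|^2 - (\mu+\varepsilon)\|x^{k+1-\tau}-x^{k-\tau}\|^2.
\end{equation}
Adding this to the $\mu$-sum from the fundamental inequality cancels the discrepancy terms for $\ell=1,\dots,\tau-1$, while the $\ell=\tau$ term contributes exactly $-\varepsilon\|x^{k+1-\tau}-x^{k-\tau}\|^2$. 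Substituting $\|x^{k+1}-x^k\|^2 = \lambda_k^2\|\Skx\|^2$ (valid for $k$ large enough that the nontrivial branch of (\ref{eq: ASI-algorithm-iteration}) is active) and $c_1 = \tau\mu+\varepsilon$, I would arrive at
\begin{equation}
\xi_{k+1}-\xi_k \leq \lambda_k\|\Skx\|^2\bigl[\lambda_k\bigl(1+\tau(\mu+1/\mu)+\varepsilon\bigr) - 1\bigr] - \varepsilon\|x^{k+1-\tau}-x^{k-\tau}\|^2.
\end{equation}

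The step-size hypothesis (\ref{eq: lambda-step-size-bd-mu}) is engineered precisely so that the bracketed expression is $\leq 0$, hence $\xi_{k+1}-\xi_k \leq -\varepsilon\|x^{k+1-\tau}-x^{k-\tau}\|^2 \leq 0$ for all $k$ beyond the warm-up phase $k \leq \tau$ (in which $x^{k+1}=x^k$ and so $\xi_k$ is constant). Since $\xi_k \geq 0$, the sequence is bounded below, so monotone convergence yields the result. The main obstacle is purely bookkeeping: getting the telescoping indices, the signs, and the coefficient of $\|\Skx\|^2$ to line up exactly so that the step-size bound (\ref{eq: lambda-step-size-bd-mu}) matches the right-hand side.
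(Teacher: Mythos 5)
Your proposal is correct and follows essentially the same route as the paper: apply Lemma \ref{lemma: fundamental-inequality}, re-index the shifted discrepancy sum using $c_{j}=\mu+c_{j+1}$ so it telescopes against the $\mu$-sum, substitute $\|x^{k+1}-x^k\|^2\leq\lambda_k^2\|\Skx\|^2$, and observe that the step-size bound makes the coefficient of $\|\Skx\|^2$ nonpositive, yielding monotone decrease of the nonnegative sequence $\{\xi_k\}$. Your bookkeeping (the cancellation for $\ell=1,\dots,\tau-1$, the residual $-\varepsilon\|x^{k+1-\tau}-x^{k-\tau}\|^2$, and the final bracket $\lambda_k(1+\tau(\mu+1/\mu)+\varepsilon)-1$) matches the paper's computation exactly.
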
   
 	\begin{proof}
 		The sequence $\{\xi_k\}_{k\in\N}$ is nonnegative; thus, it suffices to verify that it is monotonically decreasing.
 		By Lemma \ref{lemma: fundamental-inequality}, we have  
 		\begin{equation}
 			\begin{aligned}
 				\xi^{k+1}
 				& = \|x^{k+1}-z\|^2 + \sum_{\ell=1}^\tau c_\ell \|x^{k+2-\ell}-x^{k+1-\ell}\|^2 \\
 				& \leq \|x^k-z\|^2 + \sum_{\ell=1}^\tau \mu \|x^{k+1-\ell}-x^{k-\ell}\|^2 \\
 				& - \lambda_k \|\Skx\|^2\Big(1-\lambda_k(1+\tau/\mu)\Big) + \sum_{\ell=1}^\tau c_\ell \|x^{k+2-\ell}-x^{k+1-\ell}\|^2.
 			\end{aligned}
 		\end{equation}
 		Reindexing the sums and noting $\mu + c_{j+1} = c_j$, this simplifies to
 		\begin{equation}
 			\begin{aligned}
	 			\xi^{k+1}
	 			& \leq \|x^k-z\|^2 + \sum_{j=1}^\tau \left(\mu + c_{j+1}\right)\|x^{k+1-j}-x^{k-j}\|^2 + c_1\|x^{k+1}-x^k\|^2  \\
	 			& - \lambda_k \|\Skx\|^2\Big(1-\lambda_k(1+\tau/\mu)\Big) - c_{\tau+1} \|x^{k+1-\tau} - x^{k-\tau}\|^2 \\[10 pt]
	 			& = \xi_k + c_1\|x^{k+1}-x^k\|^2 - c_{\tau+1} \|x^{k+1-\tau} - x^{k-\tau}\|^2 \\
	 			& - \lambda_k \|\Skx \|^2\Big( 1-\lambda_k(1+\tau/\mu) \Big).
 			\end{aligned}
 		\end{equation}
 		The ASI algorithm generates successive iterates such that
 		\begin{equation}
 			\|x^{k+1}-x^k\|^2
 			\leq \|x^k-\lkx\Skx - x^k\|^2
 			= \lkx^2 \|\Skx\|^2.
 		\end{equation}
 		Consequently,
 		\begin{equation}
 			\xi_{k+1}
 			\leq \xi_k - \lkx \|\Skx\|^2 \Big( 1-\lkx(1+\tau/\mu + c_1) \Big) - c_{\tau+1} \|x^{k+1-\tau} - x^{k-\tau}\|^2.
 			\label{eq: xik-inequality}
 		\end{equation}
 		With our assumption in (\ref{eq: lambda-step-size-bd-mu}), the fact $c_1 = \tau \mu + \varepsilon$, and the fact $c_{\tau+1} =\varepsilon > 0$,  (\ref{eq: xik-inequality}) shows that $\xi_{k+1}\leq \xi_k$, for all $k\in \N$, from which the result follows.
 		\qed
 	\end{proof}

	\begin{remark}
	 	Naturally, we may seek to choose $\mu$ that maximizes the allowable step sizes.
	 	Such a choice of $\mu$ minimizes the function $f:(0,\infty)\rightarrow\R$ defined by $f(x):= x + 1/x$.  Since the single critical point and minimizer of $f$ occurs at $x=1$, choosing $\mu = 1$ in Lemma \ref{lemma: xi-convergence} yields the optimal step size bound. 
	\end{remark}

 	Although the above lemma  establishes the convergence of $\{\xi_k\}_{k\in\N}$, it does not guarantee that a sequence generated by terms of the form $\|x^k-z\|$ will converge. However, we are able to verify this in the following lemma by noting the inequality in (\ref{eq: xik-inequality}).    
  
 	\begin{lemma}
 		\label{lemma: xk-z-convergence}
 		If the assumptions of Lemma \ref{lemma: xi-convergence} hold,
 		then $\|x^{k+1}-x^k\|\rarrow 0$ and the sequence $\{\|x^k-z\|\}_{k\in \N}$ converges.
 	\end{lemma}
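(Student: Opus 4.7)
The plan is to extract everything from the already-established recursion (\ref{eq: xik-inequality}), namely
\begin{equation*}
\xi_{k+1} \leq \xi_k - \lambda_k \|S_{i_k}(\hat{x}^k)\|^2 \Big(1-\lambda_k(1+\tau/\mu + c_1)\Big) - c_{\tau+1}\|x^{k+1-\tau}-x^{k-\tau}\|^2,
\end{equation*}
which we obtained in the course of proving Lemma \ref{lemma: xi-convergence}. The key observation is that under the step-size bound (\ref{eq: lambda-step-size-bd-mu}), and using $c_1 = \tau\mu + \varepsilon$, we have $\lambda_k(1+\tau/\mu+c_1) = \lambda_k(1+\tau/\mu+\tau\mu+\varepsilon) \leq 1$, so the coefficient of $\|S_{i_k}(\hat{x}^k)\|^2$ is nonnegative. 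Moreover $c_{\tau+1} = \varepsilon > 0$.

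First I would telescope this inequality from $k=1$ to any $K \in \N$. Since both subtracted terms are nonnegative and $\xi_k \geq 0$, this yields
\begin{equation*}
\varepsilon \sum_{k=1}^{K} \|x^{k+1-\tau}-x^{k-\tau}\|^2 \leq \xi_1 - \xi_{K+1} \leq \xi_1.
\end{equation*}
Letting $K \to \infty$, the series $\sum_{k\in\N} \|x^{k+1-\tau}-x^{k-\tau}\|^2$ converges, so in particular its general term tends to zero. After reindexing, this is exactly $\|x^{k+1}-x^k\| \to 0$, which is the first claim.

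For the second claim, I would invoke Lemma \ref{lemma: xi-convergence}, which asserts that $\{\xi_k\}_{k\in\N}$ converges; call its limit $\xi_\infty \geq 0$. Recall
\begin{equation*}
\xi_k = \|x^k-z\|^2 + \sum_{\ell=1}^\tau c_\ell \|x^{k+1-\ell}-x^{k-\ell}\|^2.
\end{equation*}
Since $\|x^{j+1}-x^j\| \to 0$ from the first part and $\tau$ is finite, each of the $\tau$ terms in the finite sum tends to zero, so the entire sum tends to zero. Therefore $\|x^k-z\|^2 \to \xi_\infty$, and consequently $\|x^k-z\| \to \sqrt{\xi_\infty}$, which proves the convergence of $\{\|x^k-z\|\}_{k\in\N}$.

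This proof is essentially a bookkeeping exercise once (\ref{eq: xik-inequality}) is in hand; the only mild subtlety is confirming that the coefficient of $\|S_{i_k}(\hat{x}^k)\|^2$ is nonnegative so that the telescoping argument gives a summable series, and that the strict positivity $c_{\tau+1} = \varepsilon > 0$ can be used to pull out the consecutive-iterate differences. No deeper obstacle is expected.
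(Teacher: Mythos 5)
Your proof is correct and follows essentially the same route as the paper: both arguments extract $\|x^{k+1}-x^k\|\rarrow 0$ from the inequality (\ref{eq: xik-inequality}) using $c_{\tau+1}=\varepsilon>0$, and then pass to the limit in the definition of $\xi_k$ to conclude that $\|x^k-z\|^2$ converges to the limit of $\xi_k$. The only cosmetic difference is that you telescope to get summability of the squared differences, whereas the paper simply notes that $\xi_k-\xi_{k+1}\rarrow 0$ (since $\{\xi_k\}_{k\in\N}$ converges by Lemma \ref{lemma: xi-convergence}) and applies the squeeze theorem; both yield the same conclusion.
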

 	\begin{proof}
 		The inequality (\ref{eq: xik-inequality}) implies
 		\begin{equation}
 		0 \leq  c_{\tau+1} \|x^{k+1-\tau} - x^{k-\tau}\|^2
 		\leq \xi_k - \xi_{k+1}.
 		\end{equation} 
 		Letting $k \rarrow \infty$, the convergence of $\{\xi_k\}_{k\in\N}$ together with the squeeze (sandwich) theorem implies
 		\begin{equation}
 		\limk c_{\tau+1} \|x^{k+1-\tau} - x^{k-\tau}\|^2 = 0.
 		\end{equation}
 		Since $c_{\tau+1} = \varepsilon > 0$, we obtain
 		\begin{equation}
 		\limk \|x^{k+1} - x^k \| = 0.
 		\label{eq: xk-difference-converge-to-zero}
 		\end{equation}
 		Let $\xi_*$ be the limit of $\{\xi_k\}_{k\in\N}$.
 		Then combining (\ref{eq: xi-k-def}) and (\ref{eq: xk-difference-converge-to-zero}) and letting $k\rarrow\infty$ yields
 		\begin{equation}
 			\xi_* = \limk \xi_k = \limk \|x^k - z\|^2,
 		\end{equation}
 		from which the result follows since the square root function is continuous. 	
 		\qed
 	\end{proof}

   \begin{lemma}
   	If a sequence $\{x^k\}_{k\in\N}$ satisfies $\|x^{k+1}-x^k\|\rarrow 0$ and \rev{Assumption 1 holds}, then $\|x^k - \hat{x}^k\|\rarrow 0$.
   \end{lemma}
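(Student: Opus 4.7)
The plan is to bound $\|x^k - \hat{x}^k\|$ by a telescoping sum of consecutive-iterate differences, each of which tends to zero by hypothesis, and then use Assumption 1 to cap the length of the sum uniformly in $k$ so that the bound is a finite sum of sequences tending to zero.

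More concretely, by the definition $\hat{x}^k = x^{k-(d^k)_{i_k}}$, I would write
\begin{equation}
x^k - \hat{x}^k = \sum_{\ell=1}^{(d^k)_{i_k}} \bigl( x^{k+1-\ell} - x^{k-\ell}\bigr),
\end{equation}
which telescopes correctly (the same trick already used to bound the cross-term $\beta_k$ in the proof of Lemma \ref{lemma: fundamental-inequality}). Applying the triangle inequality and then invoking Assumption 1, i.e. $(d^k)_{i_k} \leq \|d^k\|_\infty \leq \tau$, I extend the sum to a full range of length $\tau$ (padding with zero terms is harmless since each summand is nonnegative):
\begin{equation}
\|x^k - \hat{x}^k\| \leq \sum_{\ell=1}^{(d^k)_{i_k}} \|x^{k+1-\ell} - x^{k-\ell}\| \leq \sum_{\ell=1}^{\tau} \|x^{k+1-\ell} - x^{k-\ell}\|.
\end{equation}

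The last expression is a finite sum (of fixed length $\tau$, independent of $k$) of terms each of which tends to $0$ as $k\to\infty$ by the hypothesis $\|x^{k+1}-x^k\| \to 0$ (for each fixed $\ell \in \{1,\ldots,\tau\}$, the sequence $\{\|x^{k+1-\ell}-x^{k-\ell}\|\}_{k}$ is just a shift of $\{\|x^{k+1}-x^k\|\}_k$). Passing to the limit yields $\|x^k-\hat{x}^k\|\to 0$, as required. There is no real obstacle here; the only subtle point is ensuring that the bound is uniform in $k$, which is exactly why Assumption 1 (the uniform bound $\tau$ on the delays) is invoked — without it, the sum could have arbitrarily many terms and the argument would fail.
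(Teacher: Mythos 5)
Your proposal is correct and follows essentially the same route as the paper: telescoping $x^k-\hat{x}^k$ into consecutive differences, bounding the number of terms by $\tau$ via Assumption 1, applying the triangle inequality, and passing to the limit. The paper states the resulting inequality directly and concludes by the squeeze theorem; your write-up merely spells out the telescoping step more explicitly.
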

   \begin{proof} 
	The fact that $\|d^k\|_\infty \leq \tau$, for all $k\in\N$, yields
	\begin{equation}
		0 \leq \|x^k - \hat{x}^k\|
		= \|x^k - x^{k-(d^k)_{i_k}}\|
		\leq \sum_{j=1}^\tau \|x^{k+1-j} - x^{k-j}\|.
		\label{eq: xk-hatxk-inequality} 
	\end{equation}
	Letting $k\rarrow\infty$ in (\ref{eq: xk-hatxk-inequality}), the right-hand side goes to zero since the sum contains finitely many terms. The result is then obtained through the squeeze theorem.
	\qed
   \end{proof}
   
   \begin{lemma}
   	\label{lemma: T-operator-residual-convergence}
   	Let $\{x^k\}_{k\in\N}$ be a sequence generated by the ASI algorithm. 
   	\rev{Suppose} there is $\varepsilon > 0$ such that $\lambda_k \geq \varepsilon$, for all  $k\in\N$\rev{, and Assumption 1 holds.
   	If $\|x^{k+1}-x^k\|\rarrow 0$, then $\|T_ix^k - x^k\|\rarrow 0$, for each $i\in[m]$.}
   \end{lemma}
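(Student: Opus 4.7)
The plan is to first show that $\|S_{i_k}(x^k)\| \rarrow 0$ along the ``active'' index, and then use the almost cyclic control to spread this convergence to every fixed index $i \in [m]$ uniformly in $k$. All three ingredients at our disposal are: (i) the algorithm's update formula, (ii) the previous lemma giving $\|x^k - \hat{x}^k\| \rarrow 0$, and (iii) the fact that $S_i = \I - T_i$ is $2$-Lipschitz since each $T_i$ is nonexpansive.

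First, from the nontrivial branch of the ASI update we have $x^{k+1} - x^k = -\lambda_k S_{i_k}(\hat{x}^k)$, so
\begin{equation}
	\|S_{i_k}(\hat{x}^k)\|
	= \frac{1}{\lambda_k}\|x^{k+1} - x^k\|
	\leq \frac{1}{\varepsilon} \|x^{k+1} - x^k\| \rarrow 0.
\end{equation}
Combining the $2$-Lipschitz property of $S_{i_k}$ with the preceding lemma, which supplies $\|x^k - \hat{x}^k\| \rarrow 0$, the triangle inequality gives
\begin{equation}
	\|S_{i_k}(x^k)\|
	\leq \|S_{i_k}(x^k) - S_{i_k}(\hat{x}^k)\| + \|S_{i_k}(\hat{x}^k)\|
	\leq 2\|x^k - \hat{x}^k\| + \|S_{i_k}(\hat{x}^k)\|
	\rarrow 0.
\end{equation}

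Next, fix any $i \in [m]$. By the almost cyclic control assumption with constant $M$, for every $k \in \N$ there exists an index $\ell_k(i) \in \{k+1, k+2, \ldots, k+M\}$ with $i_{\ell_k(i)} = i$. Since $\ell_k(i) \rarrow \infty$ as $k\rarrow\infty$, the previous step yields $\|S_i(x^{\ell_k(i)})\| = \|S_{i_{\ell_k(i)}}(x^{\ell_k(i)})\| \rarrow 0$. A telescoping argument together with $\|x^{k+1}-x^k\|\rarrow 0$ then provides
\begin{equation}
	\|x^k - x^{\ell_k(i)}\|
	\leq \sum_{j=k}^{\ell_k(i)-1} \|x^{j+1} - x^j\|
	\leq M \cdot \sup_{j \geq k} \|x^{j+1} - x^j\|
	\rarrow 0.
\end{equation}
Applying the $2$-Lipschitz property of $S_i$ once more,
\begin{equation}
	\|S_i(x^k)\|
	\leq \|S_i(x^k) - S_i(x^{\ell_k(i)})\| + \|S_i(x^{\ell_k(i)})\|
	\leq 2\|x^k - x^{\ell_k(i)}\| + \|S_i(x^{\ell_k(i)})\|,
\end{equation}
and both terms on the right tend to zero. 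Since $\|T_i x^k - x^k\| = \|S_i(x^k)\|$ and $i \in [m]$ was arbitrary, the claim follows.

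The only genuinely delicate step is step three: the per-index convergence $\|S_i(x^k)\|\rarrow 0$ must be deduced from the per-step convergence $\|S_{i_k}(x^k)\|\rarrow 0$, and this transfer is exactly what the almost cyclic control buys us, provided we can control the $O(M)$-length drift $\|x^k - x^{\ell_k(i)}\|$. The uniform step-size lower bound $\lambda_k \geq \varepsilon$ is what makes step one possible, and the bounded delay Assumption 1 is what makes the preceding lemma (used in step two) available.
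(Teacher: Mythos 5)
Your proof is correct, and while it follows the same overall strategy as the paper's — first show that the ``active'' residual vanishes, then use the almost cyclic control together with a telescoping bound of length at most $M$ to transfer this to each fixed $i\in[m]$ — the decomposition is genuinely different and arguably cleaner. The paper never isolates $\|S_{i_k}(\hat{x}^k)\|$; instead it works with the $\lambda$-relaxations $T_{i,\lambda_{t_k}}$, writes $x^{t_k+1}$ in terms of $T_{i,\lambda_{t_k}}(\hat{x}^{t_k})$ for $t_k$ the first index $\geq k$ with $i_{t_k}=i$, bounds $\|T_{i,\lambda_{t_k}}(x^k)-x^k\|$ by $3\sum_{\ell=-\tau}^{M-1}\|x^{k+\ell+1}-x^{k+\ell}\|$ using the $1$-Lipschitz continuity of the relaxation, and only at the end invokes $\lambda_k\geq\varepsilon$ to convert the vanishing of the relaxed residual into that of $\|T_i(x^k)-x^k\|$. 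You instead read $\|S_{i_k}(\hat{x}^k)\| = \|x^{k+1}-x^k\|/\lambda_k$ directly off the update (this is where your $\lambda_k\geq\varepsilon$ enters) and use the $2$-Lipschitz continuity of $S_i=\I-T_i$ where the paper uses nonexpansiveness of $T_{i,\lambda}$; this avoids the relaxed operators entirely and separates the roles of the step-size lower bound, of Assumption 1, and of the almost cyclicity more transparently, at the harmless cost of a factor $2$ in the intermediate estimates. Two minor points worth making explicit: your first display is an identity only on the ``otherwise'' branch of the ASI update, but Assumption 1 guarantees the other branch is active for at most finitely many $k$, so the limit is unaffected; and the deduction $\|S_i(x^{\ell_k(i)})\|\rarrow 0$ uses that $\ell_k(i)\geq k+1\rarrow\infty$, which you do note. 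Neither is a gap.
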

   \begin{proof} 
   	Let $i \in [m]$ and let $T_{i,\lambda}$ be the $\lambda$-relaxation of $T_i$. 	If $\{t_k\}$ is an increasing sequence such that $t_k \in \N$, for all $k\in\N$, then
   	\begin{equation}
   		\|T_{i,\lambda_{t_k}}(x^k)-x^k\|
   		= \|(1-\lambda_{t_k})x^k + \lambda_{t_k} T_i (x^k) - x^k\|
   		= \lambda_{t_k} \|T_i(x^k)-x^k\|
   		\geq  \varepsilon \|T_i(x^k)-x^k\|.
   	\end{equation}
   This inequality reveals that if $\|T_{i,\lambda_{t_k}}(x^k)-x^k\|\rarrow 0$, then 
   	$\|T_i(x^k)-x^k\|\rarrow 0$.
   	Next, we verify that the first limit holds, by setting, for each $k\in\N$,  $t_k$ to be the smallest index greater than or equal to  $k$ such that $i_{t_k} = i$. 
   	This implies
   	\begin{equation}
   		\begin{aligned}
	   		x^{t_k+1}
	   		&= x^{t_k} - \lambda_{t_k} S_{i} \left(\hat{x}^{t_k}\right)\\
	   		&= x^{t_k} - \lambda_{t_k} \hat{x}^{t_k} + \lambda_{i} T_{i} \left(\hat{x}^{t_k}\right) \\
	   		&= T_{i,\lambda_{t_k}} \left(\hat{x}^{t_k}\right)
	   		+  \left( \hat{x}^{t_k} - \hat{x}^{t_k} \right).
   		\end{aligned}
		\label{eq: 1}
   	\end{equation}
	Observe also that
   	\begin{equation}
   		\begin{aligned}
	   		\|T_{i,\lambda_{t_k} }\left(x^k\right) - x^{t_k+1} \|
	   		& = \| T_{i,\lambda_{t_k} }\left(x^k\right) -  T_{i,\lambda_k} \left(\hat{x}^{t_k}\right)
	   		- \left( x^{t_k} - \hat{x}^{t_k} \right) \| \\
	   		& \leq \|T_{i,\lambda_{t_k}} \left(x^k\right) - T_{i,\lambda_{t_k}}  \left(\hat{x}^{t_k}\right) \| +   \| x^{t_k} - \hat{x}^{t_k}  \| \\
	   		& \leq \| x^k - \hat{x}^{t_k} \| +  \|x^{t_k} - \hat{x}^{t_k} \|,
   		\end{aligned}
   		\label{eq: 2}
   	\end{equation} 
   	where the first equality holds by (\ref{eq: 1}) and the final inequality follows from the fact the $\lambda_{t_k}$-relaxation of a nonexpansive operator $T_i$ is nonexpansive.
   	Due to the bound on delays and the almost cyclicity of $\{i_k\}_{k\in\N}$, we know that $\hat{x}^{t_k} = x^j$ for some $ k-\tau \leq j \leq t_k \leq k +M$, where $M$ is the almost cyclicality constant of $\{i_k\}_{k\in\N}$. Thus, repeated application of the triangle inequality with (\ref{eq: 2}) yields 
   	\begin{equation} 
   		\begin{aligned}
   			\|T_{i,\lambda_{t_k} }\left(x^k\right) - x^{t_k+1} \|
   			&  \leq \| x^k - \hat{x}^{t_k} \| + \|x^{t_k} - \hat{x}^{t_k} \|\\
   			& \leq \sum_{\ell=-\tau}^{M-1}  \|x^{k+\ell+1} - x^{k+\ell} \|
   			+  \sum_{\ell=-\tau}^{M-1} \|x^{k+\ell+1} - x^{k+\ell} \| \\
   			& = 2 \sum_{\ell=-\tau}^{M-1}  \|x^{k+\ell+1} - x^{k+\ell} \|.
   		\end{aligned}
   	\end{equation} 
   	In a similar fashion, we deduce 
   	\begin{equation} 
   		\begin{aligned}
   			\| T_{i,\lambda_{t_k}} (x^k) - x^k\|
   			& \leq \| T_{i,\lambda_{t_k}}(x^k) - x^{t_k+1} \| + \|x^{t_k+1} - x^k\| \\
   			& \leq 2 \sum_{\ell=-\tau}^{M-1}  \|x^{k+\ell+1} - x^{k+\ell} \| + \|x^{t_k+1} - x^k\| \\
   			& \leq 3 \sum_{\ell=-\tau}^{M-1}  \|x^{k+\ell+1} - x^{k+\ell} \|.
   		\end{aligned}
   	\end{equation}  
   	Letting $k\rarrow\infty$, the right-hand side goes to zero since it is the sum of a finite number of terms that, by hypothesis, converge to zero.
   	This verifies $\|T_{i,\lambda_k}(x^k) - x^{k}\|\rarrow 0$, from which the result follows. 
   	\qed
   \end{proof}

   Now we can state and prove the main result of this paper.
   As noted previously, this result is about a generalization of the procedure that successively applies the $\lambda_k$-relaxation of the operator $T_{i_k}$, for all $k\in\N$, which occurs when $\tau = 0$.

    \begin{theorem} 
    	\label{theorem: main-result}
    	Let  $\{x^k\}_{k\in\N}$ be a sequence generated by the ASI algorithm \rev{and suppose Assumption 1 holds}.
    	If there is $\varepsilon > 0$ such that
    	\begin{equation}
    		0 < \varepsilon \leq \lambda_k \leq \dfrac{1}{2\tau + 1 + \varepsilon},
    		\ \ \mbox{for all $k\in\N$,}
    	\end{equation}
    	then the sequence $\{x^k\}_{k\in\N}$ converges weakly to a common fixed point $x^*$ of the family $\{T_i\}_{i=1}^m$, i.e.,
    	\begin{equation}
    		x^k \warrow x^* \in C = \bigcap_{i=1}^m \fix{T_i}.
    	\end{equation} 
    \end{theorem}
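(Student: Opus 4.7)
The plan is to assemble the theorem directly from the chain of lemmas already established. The key observation is that the bound $\lambda_k \le 1/(2\tau+1+\varepsilon)$ in the theorem statement coincides with the step-size bound in Lemma~\ref{lemma: xi-convergence} under the optimal choice $\mu = 1$, since the remark following that lemma shows $\mu=1$ minimizes $\mu + 1/\mu$, giving $1+\tau(1/\mu+\mu)+\varepsilon = 1 + 2\tau + \varepsilon$. So first I would fix $\mu=1$ and note that the hypotheses of Lemma~\ref{lemma: xi-convergence} are satisfied for every $z \in C$.

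Next I would invoke Lemma~\ref{lemma: xi-convergence} to conclude that for each fixed $z\in C$, the sequence $\{\xi_k\}_{k\in\N}$ (depending on $z$) is convergent. Then Lemma~\ref{lemma: xk-z-convergence} immediately yields the two facts I need: $\|x^{k+1}-x^k\|\to 0$, and $\{\|x^k-z\|\}_{k\in\N}$ converges for every $z\in C$. The first of these, combined with Assumption~1 and the unnumbered lemma that precedes Lemma~\ref{lemma: T-operator-residual-convergence}, also gives $\|x^k - \hat{x}^k\|\to 0$, although this is not strictly needed for the final step.

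Since $\lambda_k \ge \varepsilon > 0$ and $\|x^{k+1}-x^k\|\to 0$, the hypotheses of Lemma~\ref{lemma: T-operator-residual-convergence} are met, so $\|T_i x^k - x^k\|\to 0$ for every $i \in [m]$. At this point both hypotheses of Proposition~\ref{proposition: weak-convergence} are in hand: convergence of $\{\|x^k-z\|\}_{k\in\N}$ for each $z \in C$, and $\|T_i x^k - x^k\|\to 0$ for each $i$. Applying that proposition yields the weak convergence of $\{x^k\}_{k\in\N}$ to some $x^* \in C = \bigcap_{i=1}^m \fix{T_i}$, which is exactly the conclusion.

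I do not expect a real obstacle in this proof, because all the substantive work has already been done in the preceding lemmas and in Proposition~\ref{proposition: weak-convergence}. The only care needed is the arithmetic check that the bound $\lambda_k \le 1/(2\tau+1+\varepsilon)$ is precisely the $\mu=1$ specialization of the bound in Lemma~\ref{lemma: xi-convergence}; this is essentially the content of the remark after that lemma, so I would simply cite it. The theorem is therefore a two-line corollary of the mathematical machinery already developed.
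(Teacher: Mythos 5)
Your proposal is correct and follows essentially the same route as the paper's own proof: set $\mu=1$ so the step-size bound matches Lemma~\ref{lemma: xi-convergence}, pass through Lemma~\ref{lemma: xk-z-convergence} and Lemma~\ref{lemma: T-operator-residual-convergence}, and conclude via Proposition~\ref{proposition: weak-convergence}. No gaps; the arithmetic check you flag is exactly the content of the remark after Lemma~\ref{lemma: xi-convergence}, as you note.
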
 
    \begin{proof}
    	The given hypotheses make the assumptions of Lemma \ref{lemma: xi-convergence} hold, taking $\mu = 1$.
    	With these, Lemma \ref{lemma: xk-z-convergence} then
    	asserts that $\|x^{k+1}-x^k\|\rarrow 0$
    	and $\{ \|x^k-z\|\}_{k\in \N}$ converges for any $z \in C$.
    	The fact $\|x^{k+1}-x^k\|\rarrow 0$ enables Lemma \ref{lemma: T-operator-residual-convergence} to be applied to deduce $\|T_i(x^k)-x^k\|\rarrow 0$, for each $i\in[m]$.
    	This shows the assumptions of Proposition \ref{proposition: weak-convergence} hold and completes the proof. 
    	\qed
    \end{proof}

    \section{Application to Linear Systems}  
    \label{sec: linear-systems}
    In this section, we present an application for fast solution of linear systems of equations.
    Chiefly, we prove that the operators used in the method of Diagonally Relaxed Orthogonal Projections (DROP) \cite{2008-Censor-DROP} are nonexpansive and, thus, can be incorporated into the ASI algorithm.  
    Let $A \in \R^{M\times N}$ and $b\in\R^M$, and consider the problem
    \begin{equation}
    \mbox{Find $x \in \R^N$ such that $Ax=b$.}
    \label{eq: linear-system}
    \end{equation} 
    We assume that each row and column of the matrix $A$ is nonzero, that $A$ is large, and that the system is overdetermined.
    We provide background material pertaining to projection methods and then show that the ASI algorithm can be used to form an asynchronous version of DROP, which we call ASI--DROP. \\
    
    Each equation in the linear system can be associated with a closed and convex subset of $\R^N$, namely, the hyperplane 
    \begin{equation}
    H_i := \{ x \in \R^N \mid \braket{a^i,x} = b_i \},
    \ \ \mbox{for all $i \in [M]$,}
    \end{equation}
    where $a^i$ is the $i$-th row of $A$.
    The projection $P_i$ onto $H_i$ is given by 
    \begin{equation}
	    P_i(x) = x + \dfrac{b_i - \braket{a^i,x}}{\|a^i\|^2} a^i.
	    \label{eq: projection-on-hyperplane}
    \end{equation}
    In order to discuss blocks of rows from the matrix $A$, let $\{B_t\}_{t=1}^r$ be a collection of sets of indices such that $B_t \subseteq [M]$, for all  $t$, and 
    \begin{equation}
    [M]
    = \bigcup_{t=1}^r B_t.
    \label{eq: block-indices-requirement}
    \end{equation}
    Note that there may be overlapping $B_t$'s, i.e., it is possible that there exists $t\neq s$ such that $B_t\cap B_s \neq \emptyset$.
    
    \subsection{Diagonally Relaxed Orthogonal Projections}           
    The DROP algorithm is a modification of Cimmino's simultaneous projections method \cite{1938-Cimmino}, which uses the iterative process
    \begin{equation}
    	x^{k+1} := x^k - \dfrac{1}{M}\sum_{i=1}^M \rev{\dfrac{\braket{a^i,x}-b_i}{\|a^i\|^2} a^i.}
    	\label{eq: cimmino-iteration}
    \end{equation}
    When $M$ is large, the term $1/M$ restricts the progress of the iteration. 
    Letting $s_j $ be the number of nonzero entries in the $j$-th column of $A$,
    the DROP algorithm replaces the $1/M$ term in  (\ref{eq: cimmino-iteration}) with $1/s_j$     
    to define component-wise updates  via
    \begin{equation}
    	x^{k+1}_j := x^k_j - \dfrac{1}{s_j} \sum_{i=1}^M \rev{\dfrac{\braket{a^i,x}-b_i}{\|a^i\|^2} a^i_j,}
    	\ \ \mbox{for all $j\in [N]$}.
    	\label{eq: DROP-component-wise-update}
    \end{equation}
    \rev{This update (\ref{eq: DROP-component-wise-update}) first appeared in the CAV paper \cite{2001-CAV}, then in the BICAV paper \cite{2001-BICAV}, and also in the paper  that analyzed the convergence of CAV and BICAV \cite{2003-Block-It-Algorithms-Diagonal-Oblique-Projs}. Note that no proof of convergence was provided for (\ref{eq: DROP-component-wise-update}) until the DROP paper \cite{2008-Censor-DROP} and the CARP paper \cite{2005-CARP}, where we note that (\ref{eq: DROP-component-wise-update}) is a special case of CARP when each block consists of a single equation, which is called CARP1.}      
    If $A$ is sparse, then $s_j\ll M$ and this approach effectively makes the update $x^{k+1}_j$ depend upon $x_j^k$ and the average over the summands for which $a_j^i$ is nonzero. 
    \rev{For each $t \in [r]$, define $A_t$ and $b_t$ to be the submatrix and subvector of $A$ and $b$, respectively, corresponding to the row indices in $B_t$, and define the matrices 
    \begin{equation}
	    W_t := \mbox{diag}\left( \|a^i\|^{-2} \right)  ,
	    \ \ \
	    D_t := \mbox{diag}\left( 1/s_j \right) ,
	    \ \ \ 
	    \overline{A}_t := A_tD_t^{1/2}  ,
   		\label{eq: DROP-matrices-def} 
    \end{equation}
    where the $i$-th diagonal entry of $W_t\in \R^{M\times M}$ is the inverse of the square of the norm of the $i$-th row in $A_t$ and the $j$-th diagonal entry in $D_t \in \R^{N\times N}$ gives the number of nonzero entries in the $j$-th column of $A_t$.}
   For each $t\in [r]$ define the operator $U_t:\R^N\rightarrow\R^N$ by 
    \begin{equation}
	    U_t(y) := y - \overline{A}_t^T W_t\left(\overline{A}_ty - b_t\right),
	    \label{eq: DROP-block-operator}
    \end{equation}	
    where $b_t$ is the subvector of $b$ corresponding to the row indices in $B_t$ and here $T$ stands for matrix transposition.
    Given an almost cyclic control $\{t_k\}_{k\in\N}$ on $[r]$, a special case of the DROP algorithm is given by the process
    \begin{equation}
    	y^{k+1} := U_{t_k}\left( y^k \right).
    \end{equation}
    The following proposition shows this formulation of DROP can be used with the ASI algorithm.
	\begin{proposition}
		\label{prop: DROP-NE}
		Each operator $U_t$ defined by (\ref{eq: DROP-block-operator})  is nonexpansive with respect to the Euclidean norm.
	\end{proposition}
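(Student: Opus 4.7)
The plan is to reduce nonexpansiveness to a spectral bound on a PSD matrix and then establish that bound by a single Cauchy--Schwarz estimate that exploits the defining property of the diagonal weights $1/s_j$.

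First I would observe that $U_t$ is affine, so for any $x,y\in\R^N$,
\begin{equation}
	U_t(x)-U_t(y)=\bigl(\mathrm{Id}-M_t\bigr)(x-y),\qquad M_t:=\overline{A}_t^{\,T}W_tA_tD_t^{1/2}=\overline{A}_t^{\,T}W_t\overline{A}_t.
\end{equation}
Since $W_t$ is a diagonal matrix with strictly positive diagonal entries, the matrix $M_t$ is symmetric and positive semi-definite, and $U_t$ is nonexpansive in the Euclidean norm if and only if the spectrum of $M_t$ lies in $[0,2]$. I would actually establish the stronger bound $M_t\preceq \mathrm{Id}$, which implies that every eigenvalue of $\mathrm{Id}-M_t$ lies in $[0,1]$, hence $\|\mathrm{Id}-M_t\|_2\leq 1$.

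To prove $z^{T}M_tz\leq\|z\|^2$ for all $z\in\R^N$, I would expand the quadratic form using the explicit formulas for $\overline{A}_t$, $W_t$ and $D_t$:
\begin{equation}
	z^{T}M_tz=\sum_{i\in B_t}\frac{1}{\|a^i\|^2}\left(\sum_{j=1}^{N}\frac{a^i_j\,z_j}{\sqrt{s_j}}\right)^{\!2}.
\end{equation}
For each fixed $i\in B_t$, apply the Cauchy--Schwarz inequality with the weights $(a^i_j)^2$ (noting that these are zero off the support $J_i:=\{j:a^i_j\neq 0\}$ of row $i$):
\begin{equation}
	\left(\sum_{j\in J_i}\frac{z_j}{\sqrt{s_j}}\cdot a^i_j\right)^{\!2}\leq \left(\sum_{j\in J_i}\frac{z_j^2}{s_j}\right)\left(\sum_{j\in J_i}(a^i_j)^2\right)=\|a^i\|^2\sum_{j\in J_i}\frac{z_j^2}{s_j}.
\end{equation}

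Substituting this back cancels the $\|a^i\|^{-2}$ factor, and swapping the order of summation yields
\begin{equation}
	z^{T}M_tz\leq\sum_{j=1}^{N}\frac{z_j^2}{s_j}\cdot\bigl|\{i\in B_t:a^i_j\neq 0\}\bigr|=\sum_{j=1}^{N}\frac{z_j^2}{s_j}\cdot s_j=\|z\|^2,
\end{equation}
where the key step is recognizing that $|\{i\in B_t:a^i_j\neq 0\}|$ is precisely the number $s_j$ of nonzero entries in column $j$ of $A_t$, which is the defining property that motivates the choice of the diagonal relaxation matrix $D_t$. This gives $M_t\preceq \mathrm{Id}$ and therefore $\|U_t(x)-U_t(y)\|\leq\|x-y\|$, as required. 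The main (mild) obstacle is just setting up the Cauchy--Schwarz step so that the weights $(a^i_j)^2$ align with the column-count identity at the end; everything else is bookkeeping.
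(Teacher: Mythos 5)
Your proof is correct, but it takes a genuinely different (and more self-contained) route than the paper. Both arguments reduce nonexpansiveness of the affine map $U_t$ to the spectral statement that the symmetric PSD matrix $\overline{A}_t^{\,T}W_t\overline{A}_t$ has all eigenvalues in $[0,1]$. The paper gets this bound by a similarity argument (via determinants) showing $\overline{A}_t^{\,T}W_t\overline{A}_t$ and $D_tA_t^TW_tA_t$ share eigenvalues, and then imports $\rho(D_tA_t^TW_tA_t)\leq 1$ as a black box from \cite[Lemma 2.2]{2008-Censor-DROP}. You instead prove $\overline{A}_t^{\,T}W_t\overline{A}_t\preceq \mathrm{Id}$ directly by expanding the quadratic form and applying Cauchy--Schwarz with weights $(a^i_j)^2$ restricted to the support of each row, so that the column nonzero counts $s_j$ cancel exactly; in effect you have re-proven the cited lemma inline. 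This buys a fully elementary, reference-free argument and makes transparent \emph{why} the diagonal relaxation $1/s_j$ is the right normalization, at the cost of a small bookkeeping point: the count $\bigl|\{i\in B_t: a^i_j\neq 0\}\bigr|$ equals $s_j$ only if $s_j$ is taken per block $A_t$ (as in the paper's description of $D_t$); if $s_j$ counts nonzeros in the full matrix $A$, the count is merely bounded above by $s_j$, which still yields the desired inequality, so your proof survives either reading.
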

	\begin{proof}		 
		Let $t \in [r]$ and let $(\mu,v)$ be an eigenpair of $\overline{A}_t^T W_t\overline{A}_t$ so that $\mu v = \overline{A}_t^T W_t\overline{A}_t v$. Multiplying by the transpose of $v$ and dividing by $\braket{v,v}$ yields
		\begin{equation}
		\mu = \dfrac{\braket{v,\overline{A}_t^T W_t\overline{A}_t v} }{\braket{v,v}}
		= \dfrac{\|\overline{A}_t v\|_{W_t}^2}{\|v\|_2^2}
		\geq 0,
		\end{equation}
		where $\|v\|_2^2 = \braket{v,v}$ with $\braket{\cdot,\cdot}$ the inner product in $\R^N$ and $\|\cdot \|_{W_t}$ is the $W_t$-norm defined by $\|y\|_{W_t}^2 = \braket{y,W_ty}$. Note that $W_t$ is symmetric and positive definite; thus, this is well-defined.
		Since the matrix $\overline{A}_t^T W_t\overline{A}_t$ is symmetric, its eigenvalues are real. 
		Additionally,  $\overline{A}_t^T W_t\overline{A}_t$ and $D_tA_t^T W_tA_t$ have the same eigenvalues.
		Indeed, using $\det$ for determinants and $\mbox{Id}$ for the identity matrix,  we use properties of determinants to see that
		\begin{equation} 
		\begin{aligned}
		\det\left( \mu \mbox{Id} - D_tA_t^T W_tA_t \right)
		&= \det(D_t^{-1/2})\det\left( \mu \mbox{Id} - D_tA_t^T W_tA_t \right) \det(D_t^{1/2})\\
		&= \det\left( \mu \mbox{Id} - D_t^{1/2} A_t^T W_tA_tD_t^{1/2}  \right)\\
		&= \det \left(\mu \mbox{Id} - \overline{A}_t^T W_t\overline{A}_t\right).
		\end{aligned}
		\end{equation}				
		By \cite[Lemma 2.2]{2008-Censor-DROP}, we know that $\rho(D_tA_t^TW_tA_t) \leq 1.$
		Consequently,  $\mu \in [0,1]$.
		Note that
		\begin{equation}
		(\mbox{Id}-\overline{A}_t^T W_t\overline{A}_t) v
		= (1-\mu) v,
		\end{equation}
		and $(1-\mu) \in [0,1]$, which implies
		$\rho (\mbox{Id}-\overline{A}_t^T W_t\overline{A}_t) \leq 1$.
		Moreover, because $(\mbox{Id}-\overline{A}_t^T W_t\overline{A}_t)$ is symmetric,
		\begin{equation}
		\|\mbox{Id}-\overline{A}_t^T W_t\overline{A}_t\|_2
		= \rho(\mbox{Id}-\overline{A}_t^T W_t\overline{A}_t)
		\leq 1.
		\end{equation}
		Whence, for any $y^1,y^2 \in \R^N$
		\begin{equation}
		\begin{aligned}
		\|U_t(y^1) - U_t(y^2) \|_2
		&= \left\| \left( \mbox{Id} - \overline{A}_t^T W_t \overline{A}_t\right) \left(y^1-y^2 \right) \right\|_2\\
		&\leq \|\mbox{Id}-\overline{A}_t^T W_t\overline{A}_t\|_2 \|y^1 -y^2\|_2\\
		&\leq \|y^1 - y^2\|_2,
		\end{aligned}
		\end{equation}
		and we conclude that $U_t$ is nonexpansive.
		\qed
	\end{proof}

    The ASI algorithm with these DROP operators, henceforth called the ASI--DROP algorithm, takes the following form.
    Its presentation is followed by a theorem guaranteeing its convergence.
    
    \begin{algorithm}[h]
    	\caption{ASI--DROP \label{alg: ASI--DROP}}
    	\normalsize
    	\hspace*{0 pt}
    	{\bf Initialization:}
    	Let $A \in \R^{M\times N}$ and $b \in \R^M$ be given.
    	Choose any $x^1 \in \R^N$, a sequence $\{\lambda_k\}_{k\in\N}$ such that $\lambda_k \in (0,1)$ for all $k\in\N$, an almost cyclic control $\{t_k\}_{k\in\N}$ on $[r]$, and  a family of blocks of indices $\{B_t\}_{t=1}^r$  satisfying (\ref{eq: block-indices-requirement}).\\[10 pt]
    	 
    	\hspace*{0 pt}
    	{\bf Iteration:}
    	For each $k\in \N$ set    	
    	\begin{equation}
    	x^{k+1} :=
    	\begin{cases}
    		\begin{array}{ll}
    				x^k, 
    				& \mbox{\ if $k\leq \sup_{k\in\N} \|d^k\|_\infty$,}\\
    		    	x^k - \lambda_k D_{t_k} A_{t_k}^T W_{t_k} \left( A_{t_k} \hat{x}^k - b_{t_k} \right), 
    		    	& \mbox{\ otherwise.}
    		\end{array}
    	\end{cases}
    	\end{equation}
    \end{algorithm}     
    
    \begin{theorem}
    	Let $\{x^k\}_{k\in\N}$ be a sequence generated by the ASI--DROP algorithm.
	   	If the linear system (\ref{eq: linear-system}) is consistent, and if the delay vectors  are uniformly bounded in the sup norm by some $\tau \geq 0$, and if there is $\varepsilon > 0$ such that
	   	\begin{equation}
	   	0 < \varepsilon \leq \lambda_k \leq \dfrac{1}{2\tau + 1 + \varepsilon},
	   	\ \ \mbox{for all $k\in\N$,}
	   	\end{equation}
	   	then the sequence $\{x^k\}_{k\in\N}$ converges to a solution of the linear system (\ref{eq: linear-system}).
    \end{theorem}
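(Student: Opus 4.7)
The plan is to reduce this claim to Theorem~\ref{theorem: main-result} by recognizing the ASI--DROP iteration as an instance of the ASI algorithm driven by nonexpansive operators on $\R^N$ equipped with a suitable (possibly weighted) Hilbert inner product. First I would rewrite the update in Algorithm~\ref{alg: ASI--DROP} in the form $x^{k+1} = x^k - \lambda_k S_{t_k}(\hat{x}^k)$, with $T_t(y) := y - D_t A_t^T W_t(A_t y - b_t)$ and $S_t := \I - T_t$, so that verifying the hypotheses of Theorem~\ref{theorem: main-result} for the family $\{T_t\}_{t=1}^r$ will suffice.

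Next I would establish nonexpansiveness of each $T_t$. The shortest route is the substitution $y = D_t^{1/2} z$, which gives the identity $T_t(y) = D_t^{1/2} U_t(D_t^{-1/2} y)$, where $U_t$ is the operator from~(\ref{eq: DROP-block-operator}). Proposition~\ref{prop: DROP-NE} shows $U_t$ is Euclidean-nonexpansive; transferring this through the change of variables yields that $T_t$ is nonexpansive in the weighted norm $\|v\|_{D_t^{-1}}^2 := v^T D_t^{-1} v$. I would then verify that the common fixed points set contains every solution of~(\ref{eq: linear-system}): if $Ax^* = b$, then $A_t x^* = b_t$ for every $t \in [r]$, so $T_t(x^*) = x^*$, and the consistency hypothesis supplies a nonempty $\bigcap_{t=1}^r \fix{T_t}$.

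With the above in hand I would apply Theorem~\ref{theorem: main-result} to conclude $x^k \warrow x^\infty \in \bigcap_{t=1}^r \fix{T_t}$. Because $\R^N$ is finite-dimensional, weak and strong convergence coincide, and the limit $x^\infty$ satisfies $A_t x^\infty = b_t$ for every $t$; combined with~(\ref{eq: block-indices-requirement}) this forces $A x^\infty = b$, completing the proof.

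The main obstacle is that the weighted norms $\|\cdot\|_{D_t^{-1}}$ depend a~priori on the block index $t$, whereas Theorem~\ref{theorem: main-result} is phrased in a single Hilbert space. I would dispense with this by adopting the convention (common in the DROP literature and already implicit in the text preceding~(\ref{eq: DROP-component-wise-update}), where $s_j$ is computed from the full matrix $A$) that all $D_t$ coincide with a single diagonal matrix $D$, so that every $T_t$ is nonexpansive with respect to the common norm $\|\cdot\|_{D^{-1}}$. Alternatively, one can check directly that the spectral estimate $\rho(\mathrm{Id} - \overline{A}_t^T W_t \overline{A}_t) \leq 1$ extracted in the proof of Proposition~\ref{prop: DROP-NE} survives when interpreted in any fixed weighting dominated by the individual $D_t^{-1}$'s.
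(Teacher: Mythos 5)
Your proposal is correct and follows essentially the same route as the paper: both reduce the claim to Theorem~\ref{theorem: main-result} via the nonexpansiveness established in Proposition~\ref{prop: DROP-NE}, with your weighted inner product $\|\cdot\|_{D^{-1}}$ playing exactly the role of the paper's change of variables $x = D^{1/2}y$, and with consistency used to identify the common fixed points with the solutions of $Ax=b$. If anything you are more careful than the paper's own proof, which silently assumes all the $D_t$ coincide with a single $D$ when it sets $x^k = D_{t_k}^{1/2}y^k$ --- precisely the obstacle you flag and resolve explicitly.
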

    
    \begin{proof} 
	    For each $t$, set $S_t := \mbox{Id} - U_t$.
	    Proposition \ref{prop: DROP-NE} and Theorem \ref{theorem: main-result} imply that the sequence $\{y^k\}_{k\in\N}$ generated by the ASI algorithm converge to a fixed point of $U_t$.

	  With the above notations and with $U_t$ as in (\ref{eq: DROP-block-operator}), define for each $t$
	  \begin{equation}
	  S_t := \mbox{Id} - U_t.
	  \end{equation}
	    Let $y^* = \limk y^k$.
	    Then for each $t\in [r]$
	    \begin{equation}
	    b_t = \overline{A}_{t} y^* 
	    = A_t D^{1/2}_t y^*
	    = A_t x^*,
	    \end{equation}
	    taking $x^* := D^{1/2} y^*$.
	    This implies that $b = AD^{1/2} y^* = Ax^*$.	
	    For each $k\in \N$ set $x^k = D_{t_k}^{1/2} y^k$.
	    Then $x^k \rarrow x^*$ where $x^*$ is a solution to the linear system (\ref{eq: linear-system}). Moreover, the active step describing the iterate updates is
	    \begin{equation}
	    	\begin{aligned}
	    		x^{k+1}
	    		& = D_{t_k}^{1/2} y^{k+1} \\
	    		& = D_{t_k}^{1/2}\left(y^k - \lambda_k S_{t_k}\left(\hat{x}^k\right) \right) \\
	  			&=  D_{t_k}^{1/2}\left(y^k - \lambda_k (I-U_{t_k}) \left(\hat{y}^k\right)\right) \\
	  			&= D_{t_k}^{1/2} \left( y^k - \lambda_k \overline{A}_{t_k}^T W_{t_k} \left( \overline{A}_{t_k} \hat{y}^k - b_{t_k} \right)\right) \\
	  			& = x^k - \lambda_k D_{t_k} A^T_{t_k} W_{t_k}\left(A_{t_k} \hat{x}^k - b_{t_k} \right).
	    	\end{aligned}
	    \end{equation}
	    This completes the proof. 
	    \qed
    \end{proof}

    \subsection{Kaczmarz's and Other Methods}
    
    {\bf Kaczmarz Method.} A popular method for approximating solutions to linear systems is that of Kaczmarz \cite{1937-Kaczmarz}. In the context of computed tomography, this method is known as the Algebraic Reconstruction Technique (ART) since it was rediscovered there by Gordon, Bender, and Herman in \cite{1970-ART}.
    Kaczmarz's method generates updates by successively projecting each iterate $x^k$ onto an individual hyperplane $H_i$. For an almost cyclic control $\{i_k\}_{k\in\N}$ on $[M]$ and a sequence of scalars $\{\lambda_k\}_{k\in\N}$, updates in the relaxed version of Kaczmarz's method are given by the iteration
    \begin{equation}
    	x^{k+1}
    	= (1-\lambda_k) x^k + \lambda_k P_{i_k}\left(x^k\right)
    	=  x^k + \lambda_k  \left(\dfrac{b_{i_k}-\braket{a^{i_k},x^k}}{\|a^{i_k}\|^2}\right)a^{i_k},
    \end{equation}
    where $P_{i_k}$ is as in (\ref{eq: projection-on-hyperplane}).
    Since the projection operators $\{P_i\}_{i=1}^M$ are nonexpansive, we use them in the ASI framework to construct an asynchronous generalization.
    For each $i \in [M]$ set, analogously to (\ref{eq: Original-Si-def}),
    \begin{equation}
    	S_i(x) := (\I- P_i)(x)
    	= \dfrac{\braket{a^i,x}-b_i}{\|a^i\|^2} a^i.
    \end{equation}
    Then the ASI--ART method is presented formally in Algorithm \ref{alg: ASI-ART}. \\

    \begin{algorithm}[H]
    	\caption{ASI-ART \label{alg: ASI-ART}}
    	\normalsize
    	\hspace*{0 pt}
    	{\bf Initialization:}
    	Let $A \in \R^{M\times N}$ and $b \in \R^M$ be given.
    	Choose any $x^1 \in \R^N$,  a sequence $\{\lambda_k\}_{k\in\N}$ such that $\lambda_k \in (0,1)$ for all $k\in\N$, and an almost cyclic control $\{i_k\}_{k\in\N}$ on $[M]$.\\[10 pt]
    	
    	\hspace*{0 pt}
    	{\bf Iteration:} 
    	For each $k\in \N$ set
    	\begin{equation}
    	x^{k+1} :=
	    	\begin{cases}
	    	\begin{array}{ll}
	    	x^k, 
	    	& \mbox{\ if $k\leq \sup_{k\in\N} \|d^k\|_\infty$,}\\
	    	x^k - \lambda_k \dfrac{\braket{a^{i_k},\hat{x}^k}-b_{i_k}}{\|a^{i_k}\|^2} a^{i_k}, 
	    	& \mbox{\ otherwise.}
	    	\end{array}
	    	\end{cases}
    	\end{equation}    	
    \end{algorithm}

    {\bf Other Methods.} The ASI framework can be applied in conjunction with other projection methods. These include the valiant projection method (VPM) of Censor and Mansour \cite{2018-Censor-VPM}, which is known as the automatic relaxation method (ARM) of Censor \cite{1985-Censor-ARM} when applied to interval linear inequalities. We conjecture the intrepid method of Bauschke, Iorio, and Koch \cite{2014-Bauschke-Intrepid}, which is known as the ART3 method of Herman \cite{1975-Herman-ART3} when applied to linear systems, may also be used within the ASI framework.

   \section{ASI Algorithm Implementation}
	\label{sec: ASI-implementation} 		
		A sample pseudocode of the ASI algorithm is presented formally in Algorithm \ref{alg: ASI-implementation} and illustrated in Figure \ref{fig: ASI-illustration}. Our model is for a master/slave type architecture.
		Referring to the notation of Section \ref{sec: ASI-alg},
		let $x \in \H$ be arbitrary, fix $\lambda \in (0,1)$, and let $\{i_k\}_{k\in\N}$ be an almost cyclic control on $[m]$.
		First the initial iterate $x$ is sent to each of the $w$ processing nodes and we let the $\ell$-th node compute $N_\ell := S_{i_\ell}(x)$.
		The iteration counter $k$  is then set to $w+1$ and the time step counter $\theta$ to $1$. Note the time step $\theta$ is distinct from the iteration step $k$ since multiple nodes may complete their updates at the same time step, thereby enabling the iteration step to exceed the time step (i.e., $k\geq \theta$).
		The iterative process proceeds by fetching the collection of indices $F_\theta$ of nodes that produce outputs at time $\theta$. Then, for each $\ell \in F_\theta$, we update $x$ with $x - \lambda N_\ell$, where $N_\ell$ is the output of the $\ell$-th node. Then  $x$ is sent to the $\ell$-th node to compute $N_\ell = S_{i_k} (x)$.
		After the loop occurs over all elements of $F_\theta$, we increment $\theta$ by 1 and repeat the iteration step if the stopping criteria are not met. \\

	\begin{algorithm}[t]
		\caption{A Pseudocode Implementation of the ASI Algorithm\label{alg: ASI-implementation}}
		\normalsize
		{\bf Initialization:} \\
		\tab Let $x \in \H$, $\lambda \in (0,1)$, and $\{i_k\}_{k\in\N}$ an almost cyclic control on $[m]$. \\ 
		\tab {\bf for} $\ell \in [w]$ \\
		\tab \tab Send $x$ and $i_\ell$ to the $\ell$-th node to compute $N_\ell = S_{i_\ell}(x)$   \\
		\tab {\bf endfor} \\
		\tab $k\leftarrow w+1$ \\
		\tab $\theta\leftarrow 1$ \\ \ \\
		
		{\bf Master Node Iteration:} \\
		\tab {\bf while} stopping criteria not met \\
		\tab \tab Fetch set of node indices $F_\theta$ for outputs received at time $\theta$\\
		\tab \tab {\bf for} $\ell \in F_\theta$ \\
		\tab \tab \tab $x\leftarrow x - \lambda N_\ell $ \\
		\tab \tab \tab $k\leftarrow k + 1 $ \\
		\tab \tab \tab  Send $x$ and $i_k$ to $\ell$-th slave node  to compute	$N_\ell = S_{i_k}(x)$ \\
		\tab \tab {\bf endfor} \\		
		\tab \tab $\theta\leftarrow \theta+ 1$\\
		\tab {\bf end while}	\\ \ \\
		
		{\bf Slave Node $\ell$ Iteration:} \\
		\tab Read $x$ and $i_k$ as input \\
		\tab Compute $N_\ell = S_{i_k}(x)$\\
		\tab Output $N_\ell = S_{i_k}(x)$ to master node
	\end{algorithm} 	
	
	\begin{figure}[t] 
		\centering 
		{\large
			\begin{tikzpicture}[square/.style={regular polygon,regular polygon sides=4}]
			\def\P{1.}
			\coordinate (c1) at (0,0);
			\coordinate (c2) at (0,-2*\P);
			\coordinate (c3) at (0,-4*\P);
			\coordinate (c4) at (2.5*\P, -1*\P);
			\coordinate (c5) at (2.5*\P, -3*\P);
			\coordinate (c6) at (4.75*\P, -2*\P);

			\begin{scope}[very thick, every node/.style={sloped,allow upside down}]

			
			\draw[]  ($(c2)$)  
			-- node {\midarrow} ($(c4)+(-0.35*\P,0)$) ;
			

			\coordinate (p1) at ($(c6)+(0.7*\P,0)$);
			\coordinate (p2) at ($(c6)+(1.5*\P,0)$);
			\coordinate (p3) at ($(c6)+(1.5*\P,-3*\P)$);
			\coordinate (p4) at ($(c6)+(-5.75*\P,-3*\P)$);
			
			\coordinate (p5) at ($(-1.0*\P,-4*\P)$);
			\coordinate (p6) at ($(-0.25*\P,-4*\P)$);
			
			\coordinate (p7) at ($(-1.0*\P,-2*\P)$);
			\coordinate (p8) at ($(-0.25*\P,-2*\P)$);
			
			\coordinate (p9)  at ($(-1.0*\P,0*\P)$);
			\coordinate (p10) at ($(-0.25*\P,0*\P)$);
			
			\coordinate (q1) at ($(c5)!0.1!(c6)$);
			\coordinate (q2) at ($(c5)!0.9!(c6)$);
			\coordinate (q3) at ($(c4)!0.2!(c6)$);
			\coordinate (q4) at ($(c4)!0.9!(c6)$);	
			\coordinate (q5) at ($(c5)+(0,-2*\P)$);			
			\coordinate (q6) at ($(c5)+(0,-0.5*\P)$);	
			
			\draw[] (q1) -- node {\midarrow} (q2);
			\draw[] (q3) -- node {\midarrow} (q4);
			\draw[] (q5) -- node {\midarrow} (q6);
			
			\foreach \n in {1,2,3} {%
				\pgfmathtruncatemacro{\m}{\n+1}%
				\draw[] (p\n)  -- node {\midarrow} (p\m);
			}
			
			\draw[] (p4) -- node {\midarrow} (p5);
			
			\draw[] (p5) -- node {\midarrow} (p7);
			\draw[] (p7) -- node {\midarrow} (p8);
			
			
			\draw[fill = black!10!white] (c4) circle (0.85) node[] {$S_{i_k}(\hat{x}^k)$};	
			
			\draw[draw, fill=gray!20!] (c5) circle (0.65) node[] {${x}^k$};		
			
			\draw[draw, fill=gray!20!] (c6) circle (0.65) node[] {${x}^{k+1}$};			
			\end{scope}

			\foreach \x in {1,2,3} {%
				\draw[fill = gray!50!] (c\x) circle (0.5*\P);
			}%
			
			\draw[] (c1) node[] {$N_1$};
			\draw[] ($(c1)!0.5!(c2)$) node[] {{\Huge$\vdots$}};
			\draw[] (c2) node[] {$N_\ell$};
			\draw[] ($(c2)!0.5!(c3)$) node[] {{\Huge$\vdots$}};
			\draw[] (c3) node[] {$N_w$};		
			
			\end{tikzpicture}}
		\caption{Schematic architecture for the ASI algorithm. At the current iteration $k$ the latest output $N_\ell$\rev{, which is precisely $S_{i_k}(\hat{x}^k)$ in the ASI algorithm,} from the $\ell$-th node is merged with $x^k$ via a linear combination to form the update $x^{k+1}$, overwriting \rev{the global variable} $x^k$. Here $w\leq m$ is the number of processing nodes.}
		\label{fig: ASI-illustration}
	\end{figure}
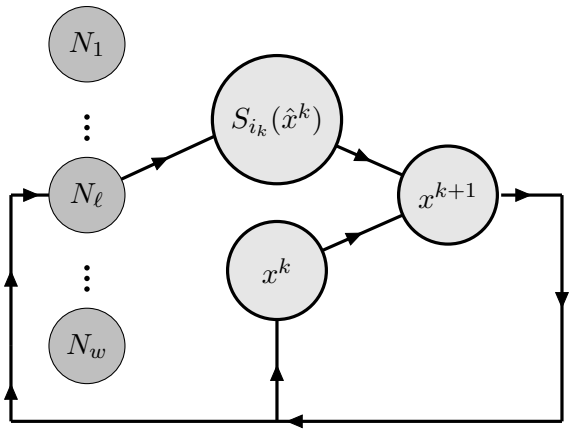		
		
		In the schematic Figure \ref{fig: ASI-illustration}, each processing node is represented by a circle with $N_\ell$ inside. At iteration step $k$, the most recent output from the collection of nodes is fetched, which is precisely $N_\ell = S_{i_k}(\hat{x}^k)$ when the most recent output is from the $\ell$-th node. 
		This is then merged with $x^k$ as in the ASI algorithm to form the new iterate $x^{k+1}$, overwriting $x^k$. The output $x^{k+1}$ is then fed to the $\ell$-th node to compute $N_\ell = S_{i_{k+1}}(x^{k+1})$.
		This effectively sets  $k\leftarrow k+1$. Then the process repeats, fetching the most recent node outputs. In this master/slave framework, each of the $w$ slave nodes applies operators from the family $\{S_i\}_{i=1}^m$ and the master node continually computes the updates by merging $x^k$ with $S_{i_k}(\hat{x}^k)$. 
		
		\begin{remark}
			Implementation of Algorithm \ref{alg: ASI--DROP} can be done using the pseudocode in Algorithm \ref{alg: ASI-implementation} by taking $\H = \R^N$, $i_k = t_k$ for all $k\in \N$, $m = r$, and $\{U_t\}_{t=1}^r = \{T_i\}_{i=1}^m$, the family of DROP operators associated with the family of blocks of indices $\{B_t\}_{t=1}^r $ in Algorithm \ref{alg: ASI--DROP}.
			Similarly, Algorithm \ref{alg: ASI-ART} can be implemented using the pseudocode in Algorithm \ref{alg: ASI-implementation} by taking $\H = \R^N$ and $m = M$.
		\end{remark} 
		
		\begin{remark}
			\label{remark: node-i_k}
			The indexing of the slave nodes can be set up as follows.
			First locally store subcollections of the family of operators $\{S_i\}_{i=1}^m$ to each slave node.
			Then let each node progress cyclically through the operators it has stored locally.
			Note that, although each slave node may proceed in a cyclic fashion, the order in which outputs arrive to the master node may not be cyclic.
			This can result from the variation in computation times on each node and, thus, arrival times to the master node.
			Despite this, the arrival of outputs to the master node will still occur in an almost cyclic fashion, as needed.
		\end{remark}

   \section{A Computational Example}
   \label{sec: example}

   The ASI algorithm is written in a sequential manner; however, we  note that being able to use out-of-date iterations enables all the processing nodes to work {\it simultaneously}. The processing nodes are also able to work independently of each other. 
   Below we apply our results in a computational example with a model computed tomography (CT) image reconstruction problem in Matlab. We use Algorithm \ref{alg: ASI-implementation} to implement Algorithm \ref{alg: ASI--DROP} and take $r = m = 40$ and $t_k = i_k,$ for all $k\in\N$.

	\subsection{Experiment Setup}

    In our computational example, we provide results using the method of
    Diagonally Relaxed Orthogonal Projections (DROP) \cite{2008-Censor-DROP}.
    We implement DROP using both the ASI algorithm and the form of asynchrony of  \cite{1992-Elsner}, i.e., without the inertial terms. 
    We refer to these as ASI--DROP and EKN--DROP, respectively (EKN is for the authors' names of \cite{1992-Elsner}).
    Note that convergence for EKN--DROP is not proven since we have not validated that the DROP operators are paracontractions. However, as expected, this method converged in all our experiments. \\
        
	 \begin{figure}[t]
	 	\centering
	 	\includegraphics[width = 2 in]{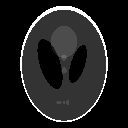}
	 	\caption{A 128x128 digitization of the Shepp-Logan phantom \cite{1974-Shepp-Logan}.}
	 	\label{fig: Shepp-Logan}
	 \end{figure}          

	We consider an image reconstruction model of CT image reconstruction. 	
	Our specific aim is to show how an inherently sequential algorithm can be executed by multiple nodes working in parallel and asynchronously.    	
	This example illustrates the speedup of DROP for solving linear systems and the speedup that occurs when using the ASI--DROP algorithm.
	The task at hand is to solve (\ref{eq: linear-system}) where $A$ is a given 176,672$\times$16,384 matrix and $b$ is a vector with 176,672 entries. The computational work was done in Matlab and
	the quantities $A$ and $b$ were generated using the Shepp-Logan phantom \cite{1974-Shepp-Logan} in Figure \ref{fig: Shepp-Logan} and the AIR Tools Matlab package \cite{2012-Hansen-AIR-Tools}.
	In our implementations, we generated $\{i_k\}_{k\in\N}$ following Remark \ref{remark: node-i_k}, i.e., the family of operators $\{S_i\}_{i=1}^m$ is loaded into memory and each slave node accesses a subcollection of this family and applies the operators from that subcollection cyclically.	
	Execution was stopped when sufficient proximity was reached. In particular, we stopped the iterations when $\|x^k - x\| < \varepsilon = \expepsilon$, where $x$ is the true image vector, i.e., the ``phantom'' from which $A$ and $b$ were reconstructed for the purpose of the reconstruction experiment.
	We define an epoch to be the number of operators $m$, which in the case of our experiment was $m=40$.
	The computation cluster used had 49.5 GB of RAM and 12 Intel Xeon X5650 processors with frequency 2.67 GHz. 
	For more in-depth material on CT image reconstruction, we refer the reader, e.g., to Herman's book \cite{2009-Herman-Book}. 
	For the asynchronous calls to each processing node, we used the \verb|parfeval| command in Matlab's Parallel Computing toolbox.\\

	 \begin{table}[t]
	 	\centering
	 	\begin{tabular}{|c|c|c|c|c|c|c|}	 			
	 		\hline
	 		\multirow{2}{*}{Method} & \multirow{2}{*}{Measurement} & \multicolumn{5}{|c|}{number of slave nodes $(w)$} \\\cline{3-7}
	 		&   		
	 		& $w=1$ 
	 		& $w=2$
	 		& $w=4$ 
	 		& $w=8$ 
	 		& $w=10$ 
	 		\\\hline	 		  		 		
	 		\multirow{3}{*}{ASI--DROP} 
	 		& time (sec)   & 751.5 & 418.0 & 226.9 & 140.3 & 163.8 \\\cline{2-7}
	 		& \# epochs & 353.9 & 357.4 & 352.5 & 352.4 & 445.0  \\\cline{2-7}
	 		& speedup & NA & 1.80 & 3.31 & 5.35 & 4.59 \\\hline
	 		\multirow{2}{*}{EKN--DROP} &
	 		time (sec)& 767.1 & 540.8 & 387.9 & 361.1 & 395.6  \\\cline{2-7}
	 		& \# epochs   & 353.9 & 427.5 & 561.4 & 840.7 & 989.0 \\\cline{2-7}
	 		& speedup & NA & 1.42 & 1.98 & 2.12 & 1.94 \\\hline			
	 	\end{tabular}
	 	
	 	\caption{Reconstruction results with iterations stopped when $\|x^k-x\| < \varepsilon = \expepsilon$. Reported values are averaged from \numtrials trials repeated on the same data set.}
	 	\label{table: lambda-0.20}
	 \end{table}       
    \subsection{Numerical Results}
    
    Reconstruction results are provided in Table \ref{table: lambda-0.20}.
    Note that the number of iterations required for the EKN--DROP approach increases as the number of nodes $w$ increases. However, in some cases, {\it fewer} iterations are required using the ASI--DROP algorithm. Furthermore, there is speedup as the number of nodes increases, given by 1.80, 3.31, 5.35, and 4.59 for 2, 4, 8, and 10 nodes, respectively. With $w=12$ nodes, the ASI algorithm does not converge with the step size $\lambda=0.20$, but does converge if the step-size $\lambda$ is sufficiently reduced (e.g., by taking $\lambda = 0.15$).
    In comparison, for EKN--DROP the speedup was 1.42, 1.98, 2.12, 1.94 for 2, 4, 8, and 10 nodes, respectively.    
    The fastest reconstruction time for ASI--DROP was 140.3 seconds, which is more than twice as fast as that for EKN--DROP (361.1 seconds) and over five times faster than the sequential implementation of DROP.  \\
    
    For both forms of asynchrony, we see diminishing returns as $w$ increases.
    It might even be more advantageous to choose $w$   smaller rather than larger.
    This is seen by comparing the ASI--DROP algorithm performance when $w=8$ and $w=10$ in Table \ref{table: lambda-0.20}.
    However, for sufficiently small $w > 1$ we still see notable performance improvements over the existing synchronous sequential case $w=1$.
    Plots of the averages of residuals for the ASI--DROP and EKN--DROP schemes are provided in Figures \ref{fig: residual-plots-comparison},  \ref{fig: residual-plots}, and \ref{fig: residual-plots-div}.    
    Figure \ref{fig: residual-plots}(a) reveals that, when the step sizes are small enough, roughly the same number of iterations is needed to obtain convergence of the ASI--DROP algorithm as $w$ increases, up to $w=8$. Figure  \ref{fig: residual-plots}(b) shows that the amount of computation time decreases as $w$ increases, up to $w=8$, for the ASI--DROP algorithm.
    For the EKN--DROP algorithm, Figure  \ref{fig: residual-plots}(c) shows more iterations are needed to obtain convergence; however, even with the larger number of iterations, Figure  \ref{fig: residual-plots}(d) shows speedup is still obtained for $w>1$.
    Figure \ref{fig: residual-plots-div} demonstrates the behavior of the ASI--DROP algorithm when the step sizes increase to nearly the maximal size that still yields convergence. There we see a wide plot, which demonstrates the variance among arrival times of the iterates generated by the ASI--DROP algorithm. Note also that the iterates do {\it not} necessarily approach the solution monotonically.
    In Figure \ref{fig: reconstructions}, we see sample reconstructions that reveal each image high equality. 
    In Figure \ref{fig: residual-plots-comparison}, we see the primary comparison plot between the ASI--DROP and EKN--DROP methods of asynchrony, which shows increasing speedup as $w$ increases for the ASI--DROP algorithm. 
    In summary, compared to the sequential application of block operators, both versions of asynchrony display speedup while the ASI--DROP approach is faster when it converges. 
     \begin{figure}[t] 
     	\centering
     	\includegraphics[width = 3.75 in]{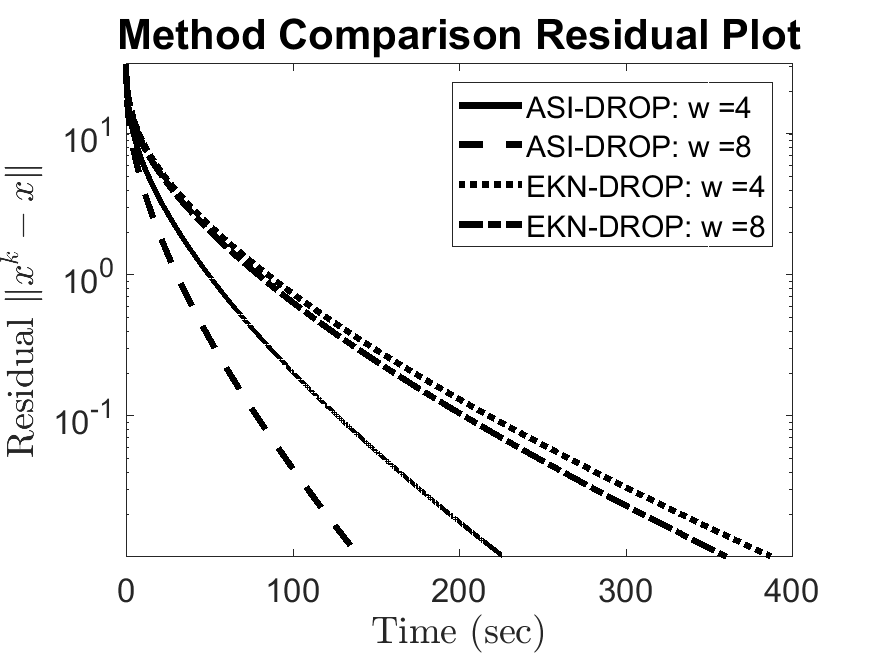}	 
     	\caption{Juxtaposition of averages of residual plots over \numtrials trials for the ASI--DROP and EKN--DROP algorithms.}
     	\label{fig: residual-plots-comparison}
     \end{figure}

    \subsection{Discussion}  
    From the computational example, we learn that much larger step sizes may possibly be used in practice, with promise, than the bound given in Theorem \ref{theorem: main-result}.
    If the updates are uniformly random and independent of the distribution of delays, then the results of ARock \cite[Table 2]{2016-Hannah-Unbounded} can be used to deduce larger step sizes yield convergence, i.e., the upper bound for step sizes is  $1/(1+2\tau/\sqrt{m})$, where $m$ is the number of operators and $\tau$ is an upper bound on the delays.
   	However, once the number of nodes $w$ increases too much, then the step size must be reduced to maintain convergence. There appears to be an optimal pairing $(\lambda,w)$ for obtaining the fastest reconstructions.
   	The EKN asynchrony may be advantageous over a sequential algorithm, but requires an increasing number of iterations as $w$ increases, thereby limiting the speedup. Due to the introduction of inertial terms (c.f. (\ref{eq: update-lin-comb-plus-pert}) and Figure \ref{fig: ASI-illustration}), we see roughly the same number of iterations may be needed for the ASI algorithm as $w$ increases, and in some cases {\it fewer} iterations are needed.\\

    Although there is a speedup for the ASI--DROP algorithm, it is sublinear in our experiments. Initially, this may seem contradictory since fewer iterations are required and more processing nodes are used. However, when several nodes produce outputs during the same time step $\theta$, a queue is formed for the updates to $x^k$ (c.f. the \verb|for| loop during the iteration of Algorithm \ref{alg: ASI-implementation}). This leads to some delays and node idle time.
    If the operators are more computationally expensive, then the chances of simultaneous node outputs at the same time step goes down. Consequently, it may be advantageous to use few costly operators rather than many computationally cheap operators. Alternatively, one may choose a different pseudocode implementation of the ASI algorithm that does not utilize the master/slave architecture, but instead uses some form of peer to peer network.
     
   \section{Conclusion}  \label{sec: conclusion}
   In this work, we present a KM-type iteration for common fixed point problems that allows for partial asynchronity, i.e., delays uniformly bounded in the sup norm by some $\tau \geq 0$.
   Convergence of this ASI algorithm is established.  This provides robustness to dropped network transmissions, removes both the need to  synchronize node outputs and to coordinate load-balancing, and reveals a method for attaining further speedup with block-iterative methods when solving large scale problems.  
   Moreover, when there is a delay, inertial terms are introduced into the iteration to accelerate convergence.
   In some cases, this reduces the number of iterations needed to converge.   
   Future work may test the ASI algorithm on massive scale problems, study application of the ASI algorithm to computing architectures other than the master/slave architecture, and investigate extensions to inconsistent convex feasibility problems.    
     
 	\section*{Acknowledgments}
 	We thank Prof. Reinhard Schulte from Loma Linda Univeristy for valuable discussions and insights regarding the advantage of an asynchronous approach to large scale problems in proton CT (pCT) image reconstruction and Intensity-Modulated Proton Therapy (IMPT).
 	We appreciate constructive comments received from Prof. Ernesto Gomez from California State University San Bernardino and Prof. Keith Schubert from Baylor University.
 	\rev{We thank the anonymous reviewers for their constructive comments. We are indebted to Patrick Combettes, Jonathan Eckstein, Dan Gordon and Simeon Reich for valuable feedback on the earlier version of this work that was posted on arXiv.}
 	The first author's work is supported by the National Science Foundation Graduate Research Fellowship under Grant No. DGE-1650604.
 	Any opinion, findings, and conclusions or recommendations expressed in this material are those of the authors and do not necessarily reflect the views of the National Science Foundation. The second author's work was supported by research grant No. 2013003 of the United States-Israel Binational Science Foundation (BSF).

 \begin{figure}[t]
 	\centering
	 	\subfloat[ASI--DROP]{\includegraphics[width = 2.5 in]{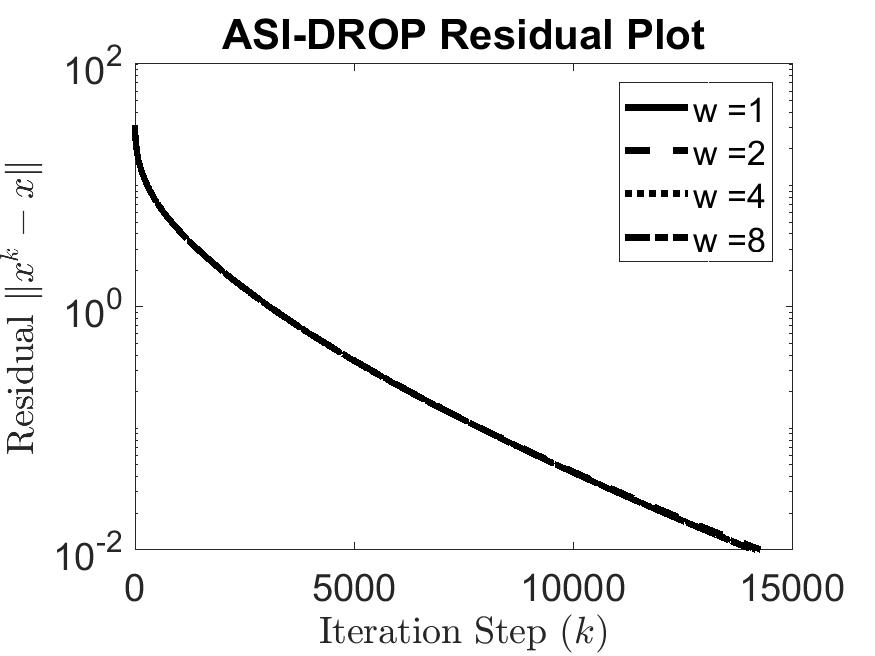}} 	
 	\hspace*{0.0 in} 
 	\subfloat[ASI--DROP]{\includegraphics[width = 2.5 in]{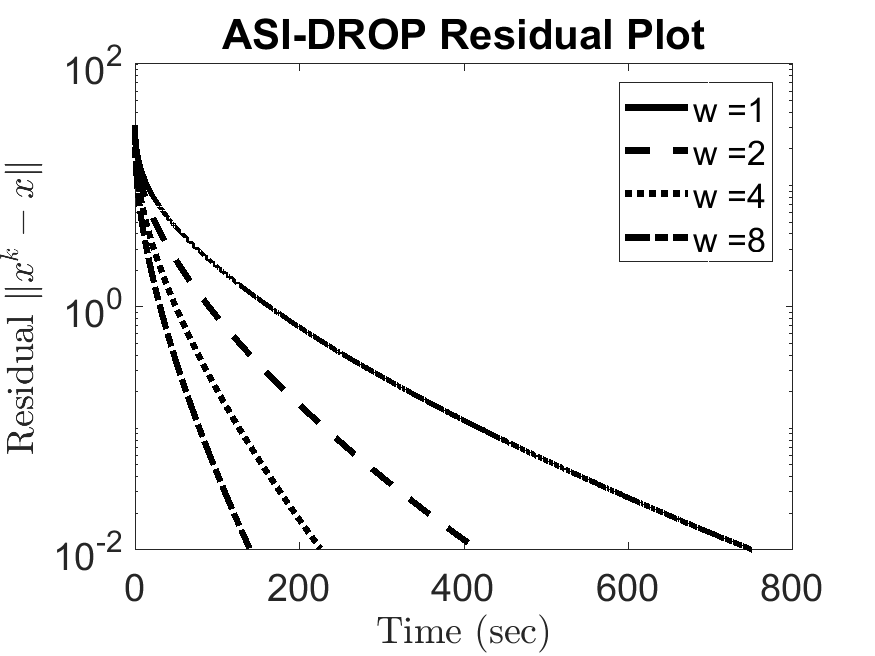}} 	 
 	\ \\	 
 	\subfloat[EKN--DROP]{\includegraphics[width = 2.5 in]{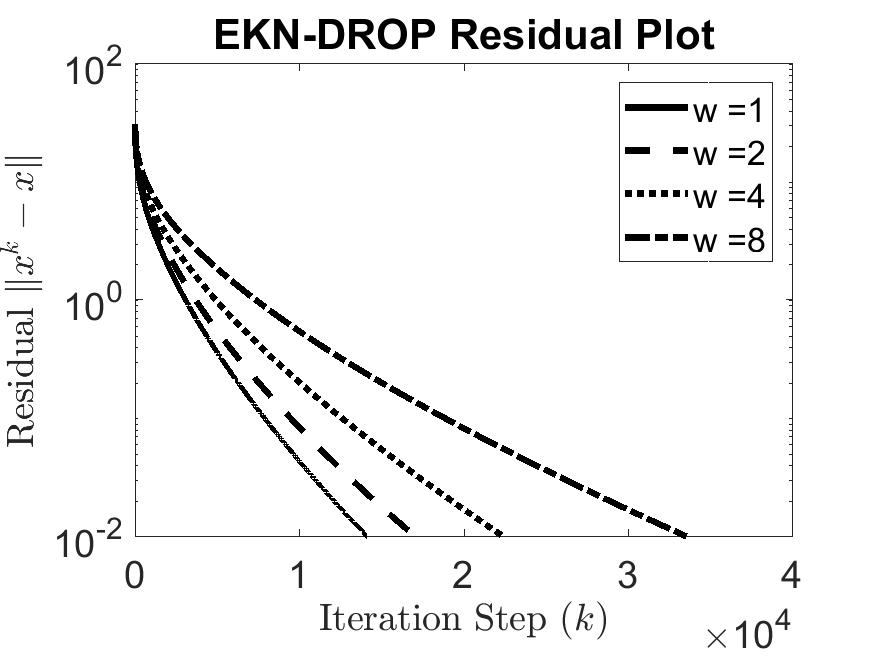}} 	
 	\hspace*{0.0 in} 
 	\subfloat[EKN--DROP]{\includegraphics[width = 2.5 in]{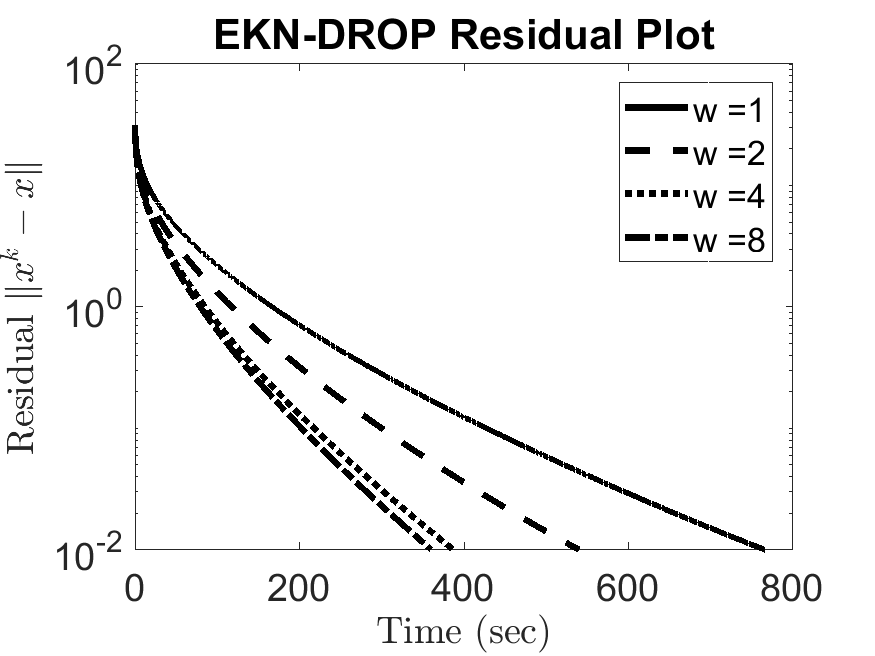}} 	 
 	\caption{Averages of residual plots over \numtrials trials for the ASI--DROP and EKN--DROP algorithms.}
 	\label{fig: residual-plots} 
 \end{figure}
 
 \begin{figure}[t]
 	\centering
 	{\includegraphics[width = 3.75 in]{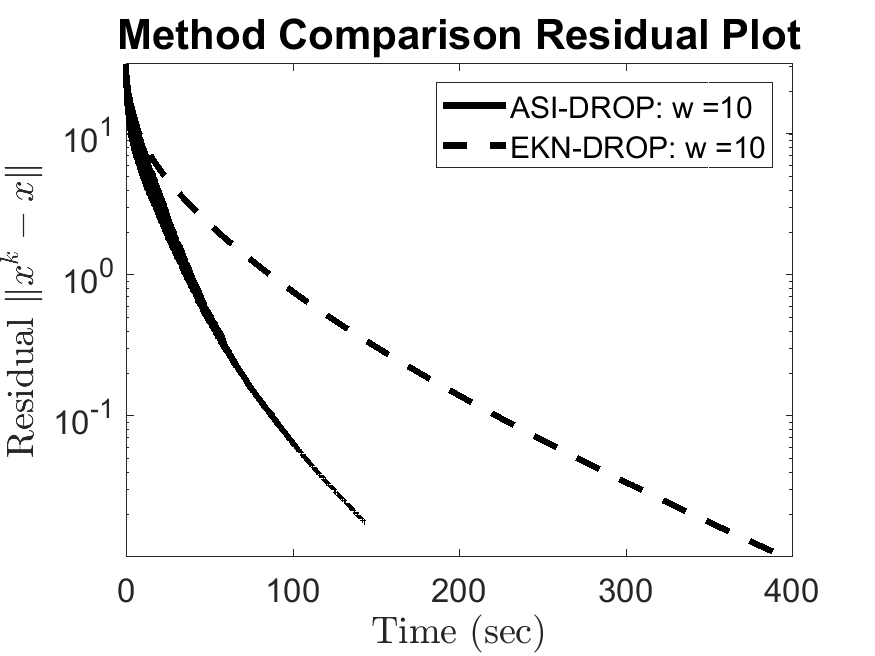}} 	 
 	\caption{Averages of residual plots over \numtrials trials for the ASI--DROP and EKN--DROP algorithms with $w=10$ slave nodes.}
 	\label{fig: residual-plots-div} 
 \end{figure}    	
 \begin{figure}[t]
 	\centering
 	\subfloat[ASI--DROP $(w=1)$]{\includegraphics[width = 1.5 in]{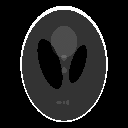}} 	
 	\hspace*{0.1 in} 
 	\subfloat[EKN--DROP $(w=1)$]{\includegraphics[width = 1.5 in]{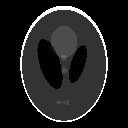}} 	 
 	\ \\	
 	\subfloat[ASI--DROP $(w=8)$]{\includegraphics[width = 1.5 in]{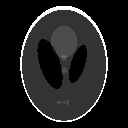}} 	
 	\hspace*{0.1 in} 
 	\subfloat[EKN--DROP $(w=8)$]{\includegraphics[width = 1.5 in]{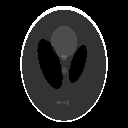}} 	 
 	\caption{Sample reconstructions.}
 	\label{fig: reconstructions} 
 \end{figure}

\bibliographystyle{spmpsci}       
\bibliography{bib-async-its} 

 \end{document}